\DeclareMathOperator*{\argmin}{\arg\!\min} 
\numberwithin{equation}{section}
\newcommand{\R}{\mathbb{R}}
\newcommand\Conv{\operatorname{Conv}}
\newcommand\diam{\operatorname{diam}}
\newcommand{\tir}[1]{\ensuremath{\overline {#1}}}
\newtheorem{thm}{Theorem}[section] 
\newtheorem{lemma}[thm]{Lemma}
\newtheorem{defn}[thm]{Definition} 
\newtheorem{rem}[thm]{Remark}
\newtheorem{ass}[thm]{Assumption}
\def\whsq{\vbox to 5.8pt 
{\offinterlineskip\hrule 
\hbox to 5.8pt{\vrule height 
5.1pt\hss\vrule height 5.1pt}\hrule}}
\def\<{\langle} 
\def\>{\rangle} 
\def\PP{{\mathop{{\rm I}\kern-.2em{\rm P}}\nolimits}} 
\def\FF{{\mathop{{\rm I}\kern-.2em{\rm F}}\nolimits}}   
\def\ZZ{{\mathop{{\rm I}\kern-.2em{\rm Z}}\nolimits}} 
\newlength{\sidemargin} 
\begin{document}

\algnewcommand{\algorithmicgoto}{\textbf{go to}}%
\algnewcommand{\Goto}[1]{\algorithmicgoto~\ref{#1}}%

\title[]{
Convergence of a damped Newton's method for discrete Monge-Amp\`ere functions with a prescribed asymptotic cone}

%\thanks{Not for dissemination, July 15, 2019 }
\thanks{ }
\author{Gerard Awanou}
\address{Department of Mathematics, Statistics, and Computer Science, M/C 249.
University of Illinois at Chicago, 
Chicago, IL 60607-7045, USA}
\email{awanou@uic.edu}  
\urladdr{http://www.math.uic.edu/\~{}awanou}

\maketitle

\begin{abstract} 

For finite difference discretizations with linear complexity and  provably convergent to weak solutions of the second boundary value problem for the Monge-Amp\`ere equation, we give the first proof of uniqueness. The boundary condition is enforced through the use of the notion of asymptotic cone while the differential operator is discretized based on a discrete analogue of the subdifferential.
We establish the convergence of a subsequence of a damped Newton's method for the nonlinear system resulting from the discretization, thereby proving the existence of a solution. Using related arguments we then prove that such a solution is necessarily unique. Convergence of the discretization as well as numerical experiments are given.

\end{abstract}

\section{Introduction}

We prove the uniqueness of solutions to a finite difference approximation of the second boundary value problem for Monge-Amp\`ere type equations. The discretization we consider was previously used for numerical experiments in  \cite{Awanou-second-sym} where a convergence analysis of a variant of the method considered in this paper was also given. 

We establish the existence of a solution to the discretization as the limit of the iterates of a damped Newton's method for solving the nonlinear system resulting from the discretization. We then observe that such a solution must be unique. In addition, we prove convergence of the discrete solutions to weak solutions of the continuous problem. 

Monge-Amp\`ere type equations with the second boundary value condition arise in geometric optics and optimal transport. 
%In \cite{Mirebeau15} a discretization of the Dirichlet problem which is %between wide stencils and power diagrams 
%based on a partial discrete analogue of the subdifferential was proposed. We consider in this paper the generalization  to the second boundary value condition proposed in \cite{Awanou-second-sym}. 
The approach in \cite{Awanou-second-sym}, as well as the one in this paper, interprets the boundary condition as prescribing the asymptotic cone of the epigraph of the convex solution to the Monge-Amp\`ere equation. %The convergence analysis of the damped Newton's method given here generalizes the ones given in \cite{Mirebeau15,kitagawa2016newton}.  

\subsection{Weak solutions of the Monge-Amp\`ere equation}

Let $\Omega$ be a bounded convex domain of $\R^d$, and let 
$\Omega^*$ be a bounded convex polygonal domain of $\R^d, d \geq 1$. We consider a locally integrable function $R$ on $\R^d$ such that $R>0$ on $\Omega^*$ and $R=0$ on $\R^d \setminus \Omega^*$.  Additionally, let $f \geq 0$ be an integrable function on $\Omega$, and assume that the compatibility condition 
\begin{equation} \label{necessary}
\int_{\Omega} f(x) d x = \int_{\Omega^*} R(p) d p,
\end{equation} 
holds. Recall that for a function $u$ on $\Omega$, the subdifferential of $u$ at $x \in \Omega$  is defined  by 
\begin{align} \label{normal-mapping}
\partial u (x) = \{ \, p \in \R^d: u(y) \geq u(x) + p \cdot (y-x), \, \text{ for all } \, y \in \Omega\,\}.
\end{align}

We are interested in approximating a convex function $u$ which solves %in the sense of Aleksandrov
\begin{align} \label{m1}
\begin{split}
%\det D^2 u(x) &= \frac{f(x)}{g(D u(x))} \text{ in } \Omega\\
R(D u(x) ) \det D^2 u(x) &= f(x) \text{ in } \Omega \\
%\chi_u (\tir{\Omega}) & = \tir{\Omega^*},
 \partial u(\Omega) &= \Omega^*,
\end{split}
\end{align}
%where $\chi_u$ denotes the subdifferential of the function $u$.
where the first equation is to be interpreted in the sense of Aleksandrov. Specifically,  for a Borel set $E\subset \Omega$, it is required that 
$$
\omega(R,u,E):=  \int_{\partial u(E)} R(p) dp =\int_{E} f(x) dx,
$$ 
where $\omega(R,u,.)$ is the $R$-Monge-Amp\`ere measure associated to $u$. The second equation in \eqref{m1} corresponds to the second boundary condition. When this condition holds, we have
$$
\omega(R,u,\Omega) =  \int_{\partial u(\Omega)} R(p) dp = \int_{\Omega^*} R(p) dp,
$$ and the compatibility condition \eqref{necessary} becomes a necessary condition for the existence of an Aleksandrov solution  $u$ to \eqref{m1}. 

Let $k_{\Omega^*}$ denote the support function of $\Omega^*$, defined by
$$
 k_{\Omega^*}(x) = \sup_{p \in \tir{\Omega^*}} p \cdot x, \text{ for } x \in \R^d.
 $$ 
 Denoting by $a_1^*,\ldots,a_{N^*}^*$ the vertices of $\Omega^*$, we have $k_{\Omega^*}(x) = \max_{a_l^*, l=1,\ldots,N^* } a_l^* \cdot x$. 
 
  It is shown in \cite{Awanou-second-sym} that if we define for $x \notin \Omega$
\begin{equation} \label{extension-00}
u(x) = \inf_{y \in \partial \Omega} u(y) + k_{\Omega^*}(x-y),
\end{equation}
we obtain a convex extension of $u$ to $\R^d$. A convex function $u$ on $\R^d$  that satisfies \eqref{extension-00} is said to have asymptotic cone $K_{\Omega^*}$, where $K_{\Omega^*}$ is the epigraph of $ k_{\Omega^*}$.

Furthermore, \eqref{m1} can be equivalently reformulated as finding a convex function $u$ on $\R^d$ with asymptotic cone $K_{\Omega^*}$ such that
\begin{equation} \label{m02}
R(D u(x) ) \det D^2 u(x) = f(x) \text{ in } \R^d,
\end{equation}
in the sense of Aleksandrov. 
We refer to solutions of \eqref{m02} as Monge-Amp\`ere functions, with anticipated applications in a more general setting \cite{Fu1999}.

Solutions of \eqref{m02} are unique up to an additive constant. For $x^1 \in \Omega$, we may require that $u(x^1)=0$. Given the compatibility condition \eqref{necessary}, \eqref{m02} is then equivalent to finding a convex function $u$ on $\R^d$ with asymptotic cone $K_{\Omega^*}$ such that
in the sense of Aleksandrov
\begin{equation} \label{m2}
R(D u(x) ) \det D^2 u(x) = f(x) + w \, u(x^1) \text{ in } \R^d,
\end{equation}
for some non zero constant $w$. 

For approximating solutions of \eqref{m2}, it has been proposed in \cite{Bakelman1994} to use piecewise linear convex functions. However, if $v$ is a piecewise linear convex function, $\partial v(\Omega)$ is a polygon \cite[Lemma 10]{Awanou-second-sym}. The assumption that $\Omega^*$ is polygonal ensures the possibility $\partial v(\Omega) = \Omega^*$.  
The case where $\Omega^*$ is not necessarily polygonal is discussed in  \cite{Awanou-second-sym}, using polygonal approximations of $\Omega^*$. 
 %The approach in \cite{} consists in seeking mesh functions which are discrete convex, c.f. Definition \ref{}. 

To illustrate the role of $w$ in \eqref{m2} for the purpose of approximating its solution, assume $u \in C^2(\Omega)$ and  let $v \in C^2(\Omega)$. Define
 $$
 J(v) = R(D v(x) ) \det D^2 v(x) - f(x) - w \, v(x^1).
 $$ 
 For the solution $u$ of \eqref{m1} with $u(x^1)=0$, we have
$J(u)= R(D u(x) ) \\
 \det D^2 u(x) - f(x)$, independent of $w$. Moreover if  $u_n \in C^2(\Omega)$ is a sequence of convex functions solving $J(u_n)=0$, the compatibility condition \eqref{necessary} implies that $u_n(x^1)=0$ when $\partial u_n(\Omega) = \Omega^*$. However, if $ \Omega^* \setminus \partial u_n(\Omega)$ has non zero Lebesgue measure, the term $w u_n(x^1)$ will not vanish. If it is known that $u_n$ and its derivatives up to second order  converge uniformly to the corresponding derivatives of $u$, then the term $w u_n(x^1)\to 0$ and thus for $w$ fixed, $u_n(x^1)\to 0$. In summary, the role of $w$ in \eqref{m2} is for the purpose of getting approximations which converge to the solution of \eqref{m1} with $u(x^1)=0$. See also \cite{benamou2017minimal} for a numerical observation. 
 
\subsection{Description of the numerical method} 

Let $h>0$ and define  the orthogonal lattice
$
\mathbb{Z}^d_h = h a + \{\, m h, m \in \mathbb{Z}^d \, \}
$
where $a \in \mathbb{Z}^d$ is an offset that may facilitate the decomposition of the domain used in the discretization of  the  Monge-Amp\`ere equation \eqref{m2d3} below. Put $\Omega_h=\Omega \cap  \mathbb{Z}^d_h$ and let $(r_1,\ldots,r_d)$ denote the canonical basis of $\R^d$. Define the discrete boundary
$$
\Gamma_h = \{ \, x \in \Omega_h  \text{ such that for some } i=1,\ldots,d, x+h r_i \notin \Omega_h \text{ or }  x-h r_i \notin \Omega_h  \,  \}.
$$
Thus $\Gamma_h \subset  \Omega_h$ consists of the lattice points near the boundary of $\Omega$.  For $x \in \Omega_h$, let $V(x)$ be a finite subset of $\mathbb{Z}^d\setminus \{0\}$. We will refer to functions defined on $\mathbb{Z}^d_h$ as mesh functions. 

We define a discrete analogue of the subdifferential as follows: for a mesh function $v_h$ and $x \in \mathbb{Z}^d_h$, 
$$
\partial_V v_h(x) = \{ \, p \in \R^d, p \cdot  (h e) \geq v_h(x) -v_h(x-h e) \, \forall  e \in V(x)  \, \},
$$
and define the discrete  $R$-Monge-Amp\`ere measure: % which may be referred to as the asymmetrical version of the discrete R-curvature: % of $v_h$ is defined as the set function
$$
\omega_V(R,v_h,E) := \int_{\partial_V v_h(E)} R(p) d p.
$$
%Let $x^1 \in \Omega_h$ to be chosen below. 
We are interested in mesh functions $v_h$ that are $V$-discrete convex, i.e., for all $x \in \Omega_h$ and $e \in V(x)$, 
$$
\Delta_{h e} v_h (x) :=  v_h(x+ h e ) - 2 v_h(x) + v_h(x- h e ) \geq 0. 
$$
The discrete analogue of the asymptotic cone condition \eqref{extension} is defined as follows: for $z \in \mathbb{Z}^d_h \setminus \Omega_h$
\begin{equation} \label{extension}
v_h(z) = \min_{ y \in \Gamma_h } v_h(y) + k_{\Omega^*}(z-y).
\end{equation}
% and we recall that $Y \subset \Omega^*$ is a polygonal domain approximating $\Omega^*$.
 
Let $\mathcal{C}^{V}$ denote the set of $V$-discrete convex mesh functions, 
and  $\mathcal{C}_0^{V}$ its subset of elements 
 satisfying the extension condition \eqref{extension}. 
 %We denote by $a^*_j, j=1,\ldots,N$ the boundary vertices of $\Omega^*$. 
 We will denote by $|E|$ the Lebesgue measure of $E$. 
 
We can now describe the discretization of the second boundary value problem we consider in this paper: %\cite{Awanou-second-sym}: 
find $u_h \in \mathcal{C}_0^{V}$  such that
\begin{align} \label{m2d3}
\begin{split}
\omega_V(R,u_h,\{\, x \,\})&= \int_{C_x} f(t) dt + w \, u_h(x^1_h), x \in \Omega_h,
\end{split}
\end{align}
where $x^1_h \in \Omega_h, x^1_h \to x^1$ as $h\to 0$ and $(C_x)_{x \in \Omega_h}$ is a partition of $\Omega$ such that:  
\begin{itemize}
\item $C_x \cap \Omega_h = \{ \, x \, \} $
\item $\cup_{x \in \Omega_h} C_x = \Omega$ 
\item $|C_x \cap C_y| \neq 0$ % has Lebesgue measure 0 for $x \neq y$
\item $|C_x| \neq 0$. % has non zero Lebesgue measure.
\end{itemize}
For interior points, one may choose
 $C_x=x+[-h/2,h/2]^d$.  This partition is essential %to assure the mass conservation \eqref{necessary} at the discrete level, i.e. %c.f. Remark \ref{conservation} below. 
%\begin{align} \label{mass-conservation}
%\sum_{x \in \Omega_h} \omega_V (R,u_h,\{\, x \,\}) = \sum_{x \in \Omega_h} \int_{C_x} f(t) dt = \int_{\Omega}  f(t) dt = \int_{\Omega^*} R(p) d p. \end{align} 
for the discretization of the measure with density $f$. %In \cite{Awanou-second-sym}, we also required that $\omega_V(R,u_h,\{\, x^1 \,\}) = \int_{C_{x^1}} f(t) dt$ as well. 
 
 The unknowns in \eqref{m2d3} are the values $u_h(x), x \in \Omega_h$. For $z \notin \Omega_h$, the value $u_h(z)$ needed for the computation of $\partial_{V} v_h (x) $ is obtained via the discrete extension  \eqref{extension}. %Problem \eqref{m2d3} can be seen as a variant of \eqref{diff1}-\eqref{diff2}. 

We prove that for $R \in C(\Omega^*)$ and $w\neq 0$, a subsequence of the damped Newton iterations converges to a solution of \eqref{m2d3}. This yields existence of a solution to \eqref{m2d3}. Moreover, we establish uniqueness of such a solution. Assuming $f>0$ on $\Omega$ and $f\in C(\tir{\Omega})$, we prove the uniform convergence of $u_h$ to the unique Aleksandrov of \eqref{m1} with $u(x^1)=0$. Furthermore, $w u_h(x^1_h) \to 0$ as $h\to0$ and the size of the stencil $V$ approaches its maximum value.

In  \cite{Awanou-second-sym}, a related discretization was considered:  find $u_h \in \mathcal{C}_0^{V_{max}}$  such that
\begin{align} \label{m2d4}
\begin{split}
\omega_V(R,u_h,\{\, x \,\})&= \int_{C_x} f(t) dt, x \in \Omega_h, u_h(x^1_h)=\beta  \in \R,
\end{split}
\end{align}
where the maximal stencil $V_{max}$ is defined in section \ref{preliminaries}. 
When $f>0$ on $\Omega$  and the stencil $V$ is maximal, $u_h$ equals its convex envelope and we obtained in \cite{Awanou-second-sym} existence and uniqueness of the solution of \eqref{m2d4}, as well as convergence of $u_h$ to the unique Aleksandrov of \eqref{m1} with $u(x^1)=\beta$. In the case $V$ is not maximal, under the assumptions that $\Omega$ is a rectangle, $f>0$ on $\Omega$ and $f\in C(\tir{\Omega})$, we proved in \cite{Awanou-second-sym} convergence of a solution $u_h$ of \eqref{m2d4} to the unique Aleksandrov of \eqref{m1} with $u(x^1)=\beta$, as $h\to0$ and the size of the stencil $V$ approaches its maximum value. 

If $u_h$ solves \eqref{m2d3} and $\sum_{x \in \Omega_h} \omega_V(R,u_h,\{\, x \,\}) =\int_{\Omega^*} R(p) dp$, which occurs for instance when $u_h$ equals  its convex envelope and the stencil $V$ is maximal \cite{Awanou-second-sym},  by the compatibility condition \eqref{necessary}, we have 
$$\sum_{x \in \Omega_h} \omega_V(R,u_h,\{\, x \,\}) =\int_{\Omega^*} R(p) dp = \int_{\Omega} f(t) dt = 
\sum_{x \in \Omega_h} \int_{C_x} f(t) dt.
$$
It follows that $u_h(x^1_h)=0$ and our results give the existence and uniqueness of a solution in  $\mathcal{C}_0^{V}$ to \eqref{m2d4} for $\beta=0$. In addition, convergence of the discretization holds for $\Omega$ bounded and convex, $f>0$ on $\Omega$ and $f\in C(\tir{\Omega})$.

\subsection{ Relation of the asymptotic cone approach with other work}

The semi-discrete optimal transport approach for approximating solutions of \eqref{m1}, c.f \cite{kitagawa2016newton}, starts with an approximation of the density $f$ by a sum of $M$ Dirac masses. A convex hull of $M$ points in $\R^{d+1}$ is constructed with a computational complexity O($M \log M$) for $d=2$ and O($M^{\lfloor (d+1)/2 \rfloor}$) for $d \geq 3$, i.e. at least a computational complexity O($M^2$) for $d \geq 3$. Solving the resulting nonlinear system of equations by a damped Newton's method has a worst-case complexity O($M^2$) \cite[Remark 5.5 ]{berman2018convergence}. In summary,  for the  semi-discrete optimal transport approach, both setting up the nonlinear system of equations and solving them has a worst-case complexity  O($M^2$) for $d=2$ and $d=3$. 

To ensure efficiency and guarantee convergence of the iterative method  for solving the discrete equations, the use of power diagrams with a damped Newton's method is advocated in \cite{Yau2013,levy2018notions,kitagawa2016newton,Aurenhammer98,merigot2011multiscale,qiu2020note}. However, this still results in a  worst-case  computational complexity at least  O($M^2$). 

Let $\#V$ denote 
the  maximum of $\{ \, \# V(x), x \in \Omega_h \, \}$ and let $M=\# \Omega_h$ denote the number of mesh points. It is shown in \cite[Section 1.3]{Awanou-second-sym} that setting up the nonlinear equations for the asymptotic cone approach has a computational complexity O($M \#V)$, and their resolution via a  damped Newton's method also has complexity O($M \#V)$. Thus when $\#V$ is chosen as a constant independent of $M$, the asymptotic cone approach has linear complexity. 

The asymptotic cone approach is essentially a finite difference method. The theoretical guarantees of interest include: existence of a solution to the discrete problem,  
%for $R$ only assumed to be locally integrable, 
uniqueness of a solution to the discrete problem and convergence of the discretization. At least one of these guarantees remains an open problem for the methods proposed in \cite{FroeseSJSC12,Benamou2014,Prins2015,benamou2017minimal,Froese2019,Bonnet-Mirebeau,Brusca-Hamfeldt} where no uniqueness results have been reported. %Existence of a solution to the discrete problem with generic assumptions on $R$ as well as convergence of the discretization were addressed for the  asymptotic cone approach in \cite{Awanou-second-sym}.
 In this paper we prove uniqueness of a discrete solution to the asymptotic cone approach in the general case where it has linear complexity, c.f. section \ref{damped}. We refer to \cite{LindseyRubinstein} for an approach related to discrete optimal transport and to \cite{kawecki2018finite} for a finite element approach. 
 %In this paper we prove uniqueness of a discrete solution to the asymptotic cone approach in the general case where it has linear complexity, c.f. section %\ref{damped}. We refer to \cite{LindseyRubinstein} for an approach related to discrete optimal transport and to \cite{kawecki2018finite} for a finite element approach. 

\subsection{ Notable features of our analysis of the damped Newton's method}

In  \cite{Mirebeau15,kitagawa2016newton} the goal is to prove the convergence of a damped Newton's method for solving $G(x)=0$ where $G: \mathcal{U} \mapsto \R^M$ and $\mathcal{U}$ is an open subset of $\R^M$. The focus in \cite{kitagawa2016newton} is on convergence rates of the Newton iterates and it is assumed therein that the mapping $G$ is $C^{1,\alpha}, 0 < \alpha <1$. In contrast, both  \cite{Mirebeau15} and the present work  assume only that $G$ is   $C^1$. 

Following  \cite{Mirebeau15}, we also assume that the preimage of any compact set under $G$ is compact, and that the Jacobian matrix $G'(x)$ is invertible for all $x \in \mathcal{U}$.  Invertibility of the Jacobian is similarly assumed in \cite{kitagawa2016newton}.  Due to the different nature of the equations studied in   \cite{Mirebeau15,kitagawa2016newton}, and in this work,  the technical verification of these assumptions necessarily differs. 

A distinctive aspect of this paper is the special treatment of the constraint that the solution lies within the set $ \mathcal{U}$. To handle this, we adopt the approach in \cite{monteiro1999potential}.  %In particular, the damped Newton's method we consider is a variant of the one used in  \cite{Mirebeau15,kitagawa2016newton}. 

\subsection{Organization of the paper}
%We organize the paper as follows. 
%In the next section, we discuss how work done by others is related to this work. 
In section 
\ref{preliminaries} we give additional preliminaries. The damped Newton's method is introduced in section \ref{damped-general} in a general setting. In section \ref{damped} we give its convergence analysis for \eqref{m2d3} and discuss the existence and uniqueness of solutions to \eqref{m2d3}.  In section \ref{limit}, we build on convergence results obtained in \cite{Awanou-second-sym} to examine the convergence of $w u_h(x^1_h)$.  We conclude the paper with some numerical experiments. 

%The nonlinear problem \eqref{m2d3} may not have a unique solution. 
%In section \ref{uniqueness}, we present the uniqueness results. 
%In section \ref{minima} we  give more information about the location of the minima in \eqref{extension-00}. This leads to an improvement of the discrete extension formula \eqref{extension} for computational purposes. In section \ref{num} we give numerical results. In the appendix, we review a result used in section \ref{minima}  on the duality between convex subdivisions associated with a piecewise linear convex function and its Legendre transform. 

\section{Preliminaries } \label{preliminaries}

We will use the notation $C$ for a generic constant and $|| \cdot ||$ for the Euclidean norm. 
We first describe the extended mesh needed for the computation of $\partial_{V} v_h (x) $ in \eqref{m2d3}. A stencil $V$ is a set valued mapping from $\Omega_h$ to the set of finite subsets of $\mathbb{Z}^d\setminus \{0\}$. Recall that a subset $W$ of $\mathbb{Z}^d$ is symmetric with respect to the origin if $\forall y \in W, -y \in W$. We define $V_{min}$ to be a finite subset of $\mathbb{Z}^d\setminus \{0\}$ which is symmetric with respect to the origin, contains the elements of the canonical basis of $\R^d$, and contains a vector parallel to a normal vector to each facet of the polygonal domain $\Omega^*$. Define the extended mesh and maximal stencil as follows: 
\begin{align*}
\Omega_{ext} & = \Omega_h \cup \{ \, x+ h e: x \in \Omega_h, e \in V_{min} \, \} \\
V_{max}(x) & = \{ \, e \in \mathbb{Z}^d \setminus \{0\}, \exists y \in \Omega_{ext}, y = x+ h e \, \}.
\end{align*}
We assume:
$$
V_{min} \subset V(x) \subset V_{max}(x), \text{ for all } x \in \Omega_h,
$$
and that $V(x) \subset \mathbb{Z}^d \setminus \{ \, 0 \, \}$ is symmetric with respect to the origin for $x \in \Omega_h$. Moreover, we assume that if $a, b \in V(x)$ and $a=r b$ for some scalar $r$, then $r=-1$.

\begin{lemma}  \cite[Theorem 4]{Awanou-second-sym} \label{exi-V-max}
If $f>0$ on $\Omega$, %there exists a unique solution to \eqref{m2d3} for $V=V_{max}$ and for which $u_h(x^1_h)=0$.
then solutions of \eqref{m2d4} for $V=V_{max}$ are unique up
to an additive constant. 
\end{lemma}

\begin{lemma} \cite[Lemma 2]{Awanou-second-sym} \label{pre-lip-lem}
%Elements of $\mathcal{C}_h^{V}$ are uniformly bounded on $\Omega_h$.
%$\tir{\Omega'} \cap \mathbb{Z}^d_h$.
There exists a constant $C$ independent of $v_h \in \mathcal{C}_0^{V}$ and $h$, such that for all $x, y \in \Omega_h$
$$ 
| v_h(x)-v_h(y) | \leq C ||x-y||.
$$
\end{lemma}

%Using $V_{\min} \subset V(x), x \in \Omega_h$, the next lemma follows from \cite[Lemma 2]{Awanou-second-sym}.
\begin{lemma} \label{inc-sub-lem}
Let $v_h \in \mathcal{C}_0^{V}$. 
Then for $x \in \Omega_h$, $\partial_V v_h (x) \subset  \Omega^*$. 
\end{lemma}
This  follows from \cite[Lemma 2]{Awanou-second-sym} using the assumption $V_{\min} \subset V(x)$. The following lemma was proved in Part 1 of  \cite[Theorem 5]{Awanou-second-sym}
\begin{lemma} \label{key-for-u}
Let $u_h, v_h \in  \mathcal{C}_0^{V}$ and assume that there exists $z \in \Omega_h$ such that $u_h(x) - v_h(x) \geq u_h(z) - v_h(z)$ for all $x \in \Omega_h$. Then: $ \omega_V(R, u_h,\{\, z\, \})  \geq  \omega_V(R, v_h,\{\, z\, \}) $. 
\end{lemma}

\begin{lemma} \label{support}
Let $Y \subset \R^d$ be a closed convex set such that $\tir{\Omega^*} \subset \mathring{Y}$. Then:
$$
k_Y(x) > k_{ \Omega^* }(x), \quad  \forall x \in R^d.
$$
\end{lemma}

\begin{proof}
From the definition of support functions, we have $k_Y(x) \geq  k_{ \Omega^* }(x)$ for all $x \in R^d$. Fix $x \in R^d$ and let
$z \in \partial \Omega^* \cap \{ \, y \in \R^d:  y \cdot x =  k_{ \Omega^* }(x) \, \}$. 
%By the supporting hyperplane theorem, there exists $p_0 \in \R^d$ such that for all $p \in \Omega^*$, $p \cdot p_0 \leq z \cdot p_0=k_{ \Omega^* }(p_0)$. 
Since $\tir{\Omega^*} \subset \mathring{Y}$, there exists $\epsilon>0$ such that the ball $B(z,\epsilon)$ of center $z$ and radius $\epsilon$ is contained in $Y$. As $z \in \partial \Omega^*$, there exists $y \in B(z,\epsilon) \setminus \tir{\Omega^*}$. We have $y \in Y$ and
$y \cdot x >  k_{ \Omega^* }(x) $. It follows that $k_Y(x) \geq y \cdot x >  k_{ \Omega^* }(x)$. 
\end{proof}

\section{The  damped Newton's method } \label{damped-general}

Let $\mathbb{U}$ be an open subset of $\R^N$ and let $G: \tir{ \mathbb{U} } \to \R^N$. We assume that 
 $G \in C^1(\mathbb{U}_{ },\R^N)$ and $G$ is a proper map with $\det G'(v)  \neq 0$ for all $v \in \mathbb{U}_{  }$. 
%We recall that $|| . ||$ denotes the Euclidean norm in $\R^N$
 
Modifications of the generic damped Newton's method needed to ensure convergence to a zero of
 $G$ within
$\tir{\mathbb{U} }$ were developed in \cite{monteiro1999potential}. We adopt their framework and  make the following assumption

\begin{ass} \label{a1}
There exists a closed convex set $T \subset \R^N$ such that
\begin{enumerate}
\item[(a)] $0 \in T$
\item[(b)] the (open) set $\mathbb{U}_{ \mathcal{I}} := G^{-1}( \mathring{T} ) \cap  \mathbb{U}$ is nonempty
\item[(c)] the set $G^{-1}(  \mathring{T}  ) \cap \partial \mathbb{U}$ is empty. 
\end{enumerate}
\end{ass}

%Assume that 

Let $v^0 \in \mathbb{U}_{ \mathcal{I} }$. Given the current iterate $v^k$, the next iterate is sought along the path
$$
p^k(\tau) = v^k - \tau \big(G'(v^k)  \big)^{-1} G(v^k).
$$
Fix parameters $\delta \in (0,1)$ and choose $\rho \in (0,1)$, e.g. $\rho=1/2$. 
\begin{algorithm}
  \caption{ A damped Newton's method}\label{euclid}
  \begin{algorithmic}[1]
    \State Choose $v^0 \in \mathbb{U}_{ \mathcal{I} }$ and set $k=0$
     
  \State \textbf{If}  $G(v^k)=0$ 
  \textbf{stop} \label{marker}  
    
    \State Let $i_k$ be the smallest non-negative integer $i$ such that: % $p^k(\rho^i) \in \mathbb{U}_{ \mathcal{I} }$ and
    $$
   p^k(\rho^i) \in \mathbb{U}_{ \mathcal{I} } \text{ and } || G(p^k(\rho^i)) ||^2 \leq (1 - \delta \rho^i) || G(v^k) ||^2.
    $$
    Set $v^{k+1} = p^k(\rho^{i_k})$
    \State $k\gets k+1$ and \Goto{marker}.
 \end{algorithmic}
\end{algorithm}

Let $q: \R^N \to \R$ defined by $q(x) = ||x||^2$ and define the associated function $\chi: \mathbb{U} \to \R$ by
$$
\chi(v) = q(G(v)) = ||G(v)||^2.
$$
Let $d^k = -  \big(G'(v^k)  \big)^{-1} G(v^k)$ denote the Newton  direction. We have 
\begin{multline*}
\chi'(v^k) (d^k) = q'(G(v^k)) (G'(v^k) d^k)= q'(G(v^k)) (-G(v^k)) = -2 G(v^k) \cdot G(v^k) \\= -2 || G(v^k)||^2 <0.
\end{multline*}

Thus, $d^k$ is a descent direction for $\chi(v)$ and since $ \mathbb{U}_{ \mathcal{I} }$ is open, the line search in the algorithm is well defined. 

The following result is a consequence of \cite[Theorem 3]{monteiro1999potential}. Therein, the function $q$ is referred to as a potential and the damped Newton's method thus induces a potential reduction. %For convenience, we include in the supplemental material the proof of \cite[ Theorem 3 ]{monteiro1999potential} for our choice of potential and with the notations of \cite[p. 732]{monteiro1999potential}, $a=0$ and $\tir{\sigma}=1$.

\begin{thm} \label{damped-th}
Assume that Assumption \ref{a1} holds. Let $G \in C^1(\mathbb{U}_{ },\R^N)$ be a proper map with 
$\det G'(v)  \neq 0$ for all $v \in \mathbb{U}_{  }$. For any initial guess $v^0$ in $\mathbb{U}_{\mathcal{I}}$, there exists a subsequence $v^{k_l}$ of the damped Newton's method iterates $v^k$ that converges to a zero $v$ of $G$ in $\tir{\mathbb{U}}$. 
\end{thm}

We note that by Assumption \ref{a1}, $v^k \in 
   \mathbb{U}_{ \mathcal{I} }$ for all $k$. Thus $G(v^k) \in  \mathring{T} $ for all $k$. If for example $0 \notin  \mathring{T} $, 
we can only guarantee convergence of the sequence in $\tir{\mathbb{U}}$ as $\partial  \mathbb{U}_{ \mathcal{I} } \subset \tir{\mathbb{U}}$. We also note that under the assumptions of Theorem \ref{damped-th}, if $G$ has a unique zero $v$ in $\tir{\mathbb{U}}$, the whole damped Newton's sequence converges to $v$.

\section{Convergence of the  damped Newton's method for the discretization} \label{damped}

In this section we prove existence and uniqueness of a solution to \eqref{m2d3}. 
Recall the extension formula \eqref{extension}. For $x \in \Omega_h$ and $e \in V(x)$ such that $x+ h e \notin \Omega_h$, we define
$$
\Gamma(x+he)= \argmin_{ y \in \Gamma_h }  v_h(y) + k_{\Omega^*}(x-y),
$$
where $ \Gamma_h$ is the discrete boundary and $k_{\Omega^*}$ the support function of $\Omega^*$.
A priori, $\Gamma(x+he)$ is multi-valued. We assume that for the implementation a unique choice is made for pairs $(x,e)$ such that $x+ h e \notin \Omega_h$. If $x+ h e \in \Omega_h$, we put $\Gamma(x+he)=x+h e$.

We define
$$
\mathcal{M}_h = \{ \, x + h e, x \in \Omega_h \text{ and } e \in V(x) \cup\{\,0\,\}\, \}.
$$

Let $M$ and $N$ denote the cardinality of $\Omega_h$ and $\mathcal{M}_h$ respectively.
We denote the points of $\Omega_h$ by $x^i, i=1,\ldots,M$ and the points of $\mathcal{M}_h\setminus \Omega_h$ by
 $x^i, i=M+1,\ldots,N$. In this section $h$ is fixed. Hence, for convenience, we do not indicate the dependence of $x^1$ on $h$.

The set $\mathcal{F}_h$ of mesh functions on $\mathcal{M}_h$, is identified with $\R^{N}$ by mapping $v_h$ to
$ (v_h(x_i))_{i=1,\ldots,N}$. %We note that mesh functions on $\Omega_h$ are naturally extended to $\mathbb{Z}_h^d$ using \eqref{extension}. 
We consider a map $G: \mathcal{F}_h \to \R^{N}$ defined by
\begin{align} \label{Gg}
%G_1(v_h) & = v_h(x_1)-\alpha \\
\begin{split}
G_i(v_h) & =  \omega_V(R,v_h,\{\, x^i \,\})- \int_{C_{x^i}} f(t) dt - w \, v_h(x^1) \text{ for } i=1,\ldots,M \\
G_i(v_h) & = -v_h(x^i) + v_h(y) + k_{\Omega^*}(x-y), y= \Gamma(x^i), i=M+1,\ldots,N,
\end{split}
\end{align}
and we recall that $w \neq 0$ is a constant. %We now make the assumption that $f>0$ in $\Omega$. 
It will be convenient to consider the mapping $G$ for $w=0$ as well.

Next, we choose the closed convex set $T$ from Assumption \ref{a1} and the open subset $\mathbb{U}$ of $\R^N$. Put
\begin{equation} \label{t}
T:=  \{ \, v=(v_i)_{i=1,\ldots,N}, v_i \geq 0,  i=1,\ldots,M, v_i \leq 0, i=M+1,\ldots,N \, \}.
\end{equation} 
For $v \in \R^J$, we will use the notation $v\geq0$ if $v_i\geq 0$ for all $i$. Similarly, we will also use $v>0, v\leq 0$ or $v<0$. Define
\begin{multline*}
\mathbb{U} = \{ \, v_h \in \mathcal{F}_h, \omega_V(R,v_h,\{\, x \,\}) >0, % \, \forall x \in \Omega_h \text{ and }  \\
w v_h(x^1) +  \int_{C_{x}} f(t) dt - \omega_V(R,v_h,\{\, x \,\}) >0, \\ \, \forall x \in \Omega_h  \text{ and }
v_h(x) - v_h( \Gamma(x)) - k_{\Omega^*}(x- \Gamma(x)) >0, \, \forall x \in \Gamma_h
\, \}.
\end{multline*}
We will need below the set
$$
\mathbb{U}_0 = \{ \, v_h \in \mathcal{F}_h, \omega_V(R,v_h,\{\, x \,\}) >0, \, \forall x \in \Omega_h
\, \}.
$$

\begin{rem} \label{imply-disc-convex}
The condition $\omega_V(R,v_h,\{\, x \,\}) >0, \, \forall x \in \Omega_h$ implies that  $v_h$ is strictly $V$-discrete convex in the sense that $\Delta_{h e} v_h(x) >0$ for all $e \in V(x)$. This follows from $\partial_V v_h(x) \subset \{ \, p \in \R^d,  v_h(x) - v_h(x-he) \leq p \cdot (h e) \leq v_h(x+he) - v_h(x)
\, \}$. If $\Delta_{h e} v_h(x) =0$, $\partial_V v_h(x) $ is contained in a set of measure 0, which gives $\omega_V(R,v_h,\{\, x \,\}) =0$. In particular, elements of 
$\mathbb{U}$ are strictly $V$-discrete convex. 
\end{rem}

%If $v_h$ solves \eqref{m2d3} and \eqref{non-dg} holds, we must have for $x \in \Omega_h$, $\int_{C_x} f(t) dt + w v_h(x^1) >0$. In case $\int_{C_x} f(t) dt=0$ for some $x \in \Omega_h$, this implies $w v_h(x^1) >0$ and $ \int_{\Omega} f(x) dx < \sum_{x \in \Omega_h} \omega_V(R,v_h,\{\, x \,\}) $. 
%If $v_h$ is equal to its convex envelope \cite{Awanou-second-sym}, we have 
%$$\sum_{x \in \Omega_h} \omega_V(R,u_h,\{\, x \,\}) =\int_{\partial_V v_h(\Omega_h)} R(p) dp \leq \int_{\Omega^*} R(p) dp = \int_{\Omega} f(x) dx,
%$$ which would contradict $ \int_{\Omega} f(x) dx < \sum_{x \in \Omega_h} \omega_V(R,v_h,\{\, x \,\}) $. 
%We will therefore assume $f>0$ on $\Omega$.

%The set $\mathbb{U}_{}$ is non empty since it contains the solution $u_h$ of \eqref{m2d4} for $f=1$, $V=V_{max}$ and for which $u_h(x^1)=0$, c.f. Lemma \ref{exi-V-max}. 

The first goal of this section is to verify the assumptions of Theorem \ref{damped-th} for the equation $G(v_h)=0$ with $G$ given by \eqref{Gg}. %Let $ 
%\mathbb{U}_{\epsilon} = \{ \, v_h \in \mathbb{U}_{},  G_i(v_h) > -\epsilon, i=2,\ldots,M+1\, \}$. 
The second goal of this section is the proof of uniqueness of solutions to \eqref{m2d3} which are in $\mathbb{U}_0$.
We will first prove the following theorem

\begin{thm} \label{main0} Assume that $ R \in C(\Omega^*)$ and $w \neq 0$. Given an initial guess $v_h^0 \in \mathbb{U}_{\mathcal{I}}$,  there is a subsequence $v_h^{k_l}$ of the damped Newton's method which converges to a solution $u_h$ of \eqref{m2d3}. 
\end{thm}

The proof of Theorem \ref{main0} proceeds in several steps. We first show that the set $\mathbb{U}$ is open and that Assumption \ref{a1} holds. The $C^1$ continuity of $G$ is established in Theorem \ref{C1-continuous} and the invertibility assumption in Theorem \ref{det} for $w \neq 0$. %We  prove in Theorem \ref{check-tau} of section \ref{non-empty} the path condition of Theorem \ref{damped-th}. We also show in the same section that the set $\mathbb{U}$ is a non empty open set. 
Finally, in Theorem \ref{proper}, we prove that the mapping $G$ is proper for $w \neq 0$. We are then in a position to give the proof of Theorem \ref{main0}.

\subsection{ The set $\mathbb{U}$ is open and Assumption \ref{a1} holds} \label{non-empty}

\begin{thm}
The set $\mathbb{U}$ is open.
\end{thm}
\begin{proof}
For $v_h$ and $w_h$ in $\mathcal{F}_h$, we define
$$
|w_h-v_h|_{\infty} := \max \{ \, |w_h(x) - v_h(x)|, x \in \mathcal{M}_h \, \}.  
$$
Let $v_h \in \mathbb{U}$ and $x \in \Omega_h$. We have $\omega_V(R,v_h,\{\, x \,\}) >0$ and $w v_h(x^1) +  \int_{C_{x}} f(t) dt - \omega_V(R,v_h,\{\, x \,\}) >0$. We show that there exists $\epsilon>0$ such that if $|v_h-w_h|_{\infty} < \epsilon$, then 
$\omega_V(R, w_h,\{\, x \,\}) >0$ and $w w_h(x^1) +  \int_{C_{x}} f(t) dt - \omega_V(R,w_h,\{\, x \,\}) >0$. 

 Let $\epsilon>0$ such that $|v_h-w_h|_{\infty} < \epsilon$. If $p \in \partial_V w_h(x)$, for all $e \in V(x)$, we have
$$
p \cdot (he) \leq w_h(x+he) - w_h(x) \leq v_h(x+he) + \epsilon - v_h(x) + \epsilon.
$$
Choosing $e=\pm r_i$ where $\{ \, r_1,\ldots,r_d \, \}$ is the canonical basis of $\R^d$, we conclude that 
$ \partial_V w_h(x)$ is bounded. We have
\begin{multline*}
\omega_V(R,w_h,\{\, x \,\})  - \omega_V(R,v_h,\{\, x \,\}) = \int_{\partial_V w_h(x)} R(p) dp - \int_{\partial_V v_h(x)} R(p) dp  \\ = \int_{\partial_V w_h(x) \setminus \partial_V v_h(x) } R(p) dp -  \int_{\partial_V v_h(x) \setminus \partial_V w_h(x) } R(p) dp.
\end{multline*}
If $p \in \partial_V w_h(x) \setminus \partial_V v_h(x)$ there exists $e \in V$ such that 
$$
v_h(x+he)  - v_h(x)  < p \cdot (h e) \leq w_h(x+he) - w_h(x). 
$$
Since
$$
| (w_h(x+he) - w_h(x)) - (v_h(x+he)  - v_h(x)) | \leq 2 \epsilon,
$$
and $\partial_V w_h(x)$ is bounded, we conclude that there is a constant $C$ which depends on $v_h$ and $h$ such that
$| \partial_V w_h(x) \setminus \partial_V v_h(x) | \leq C \epsilon$. Similarly, 
$| \partial_V v_h(x) \setminus \partial_V w_h(x) | \leq C \epsilon$ for a constant $C$ which depends on $v_h$ and $h$. 

We have
$\lim_{\epsilon \to 0}  \int_{\partial_V w_h(x) \setminus \partial_V v_h(x) } R(p) dp
= \lim_{\epsilon \to 0} \int_{\partial_V v_h(x) \setminus \partial_V w_h(x) } R(p) dp =0$ since $R$ is integrable. We conclude that 
$$
\lim_{\epsilon \to 0} | \omega_V(R,w_h,\{\, x \,\})  - \omega_V(R,v_h,\{\, x \,\}) | =0 \text{ and }
$$
$$
\lim_{\epsilon \to 0} |(w w_h(x^1) -  \omega_V(R,w_h,\{\, x \,\}) ) - ( w v_h(x^1) -  \omega_V(R,v_h,\{\, x \,\})  ) | = 0.
$$
Therefore, there exists $\epsilon>0$ such that
\begin{align} \label{01}
| \omega_V(R,w_h,\{\, x \,\})  - \omega_V(R,v_h,\{\, x \,\}) | & < \omega_V(R,v_h,\{\, x \,\}) /2,
\end{align}
and 
\begin{multline*}
| (w w_h(x^1)- \omega_V(R,w_h,\{\, x \,\}))  - (w v_h(x^1)- \omega_V(R,v_h,\{\, x \,\})) |  < \frac{1}{2} \bigg(w v_h(x^1) \\
 +  \int_{C_{x}} f(t) dt - \omega_V(R,v_h,\{\, x \,\}) \bigg).
\end{multline*}
It follows that $ \omega_V(R,w_h,\{\, x \,\}) >   \omega_V(R,v_h,\{\, x \,\})/2 >0 $ and 
\begin{multline}  \label{02}
w w_h(x^1)- \omega_V(R,w_h,\{\, x \,\}) +\int_{C_{x}} f(t) dt) > \frac{1}{2} \left(w v_h(x^1)- \omega_V(R,v_h,\{\, x \,\}) \right) \\ + \frac{1}{2}  \int_{C_{x}} f(t) dt>0.
\end{multline}
%As a consequence $w w_h(x^1)- \omega_V(R,w_h,\{\, x \,\})  +  \int_{C_{x}} f(t) dt > 1/2( \int_{C_{x}} f(t) dt)$

Since $\Omega_h$ is finite, we can choose $\epsilon_1>0$ 
such that \eqref{01} and \eqref{02} hold for all $x \in \Omega_h$. 

Furthermore, let $\epsilon_2= 1/2 \min \{ \, v_h(x) - v_h( \Gamma(x)) - k_{\Omega^*}(x- \Gamma(x)) >0:  x \in  \mathcal{M}_h \setminus \Omega_h \, \}$. For $\epsilon=
\min\{\, \epsilon_1, \epsilon_2 \, \}$, we have in addition to \eqref{01} and \eqref{02}, for all $x \in  \mathcal{M}_h \setminus \Omega_h$
\begin{multline*}
w_h(x) - w_h( \Gamma(x)) - k_{\Omega^*}(x- \Gamma(x)) > v_h(x) - v_h( \Gamma(x)) - k_{\Omega^*}(x- \Gamma(x)) - 2 \epsilon >0.
\end{multline*}
This concludes the proof. 
\end{proof}

Next, we verify  Assumption \ref{a1}. We have by construction $0 \in  T$ with $T$ defined by \eqref{t}. We first show that $G^{-1}(  \mathring{T}  ) \cap \partial \mathbb{U}$ is empty.

If $v_h \in  \partial \mathbb{U}$, one, two or all of the following hold:
\begin{itemize}

\item $ \omega_V(R,v_h,\{\, x \,\}) =0$ for some $x \in \Omega_h$
\item $w \, v_h(x^1) +  \int_{C_{x}} f(t) dt- \omega_V(R,v_h,\{\, x \,\})  =0$ for some $x \in \Omega_h$
\item $v_h(x) - v_h( \Gamma(x)) - k_{\Omega^*}(x- \Gamma(x))=0 $ for some $x \in \mathcal{M}_h \setminus \Omega_h$. 
\end{itemize}
If the second or third condition hold, $G_i(v_h) =0$ for some $i=1,\ldots,N$ and thus $G(v_h) \notin \mathring{T} $. Assume that $ \omega_V(R,v_h,\{\, x^i \,\}) =0$ for some $x^i \in \Omega_h$, $i=1,\ldots,M$ and we don't already have $w v_h(x^1) +  \int_{C_{x^i}} f(t) dt- \omega_V(R,v_h,\{\, x^i \,\})  =0$. 
Since $v_h \in \partial \mathbb{U}$, we conclude that $0 < w v_h(x^1) +  \int_{C_{x^i}} f(t) dt- \omega_V(R,v_h,\{\, x^i \,\}) = w v_h(x^1) +  \int_{C_{x^i}} f(t) dt$. It follows that $G_i(v_h) = - \int_{C_{x^i}} f(t) dt - w \, v_h(x^1)<0$. Therefore  $G(v_h) \notin \mathring{T} $. %Thus  $G(v_h) \notin \mathring{T} $.  
This proves that $G^{-1}(  \mathring{T}  ) \cap \partial \mathbb{U} = \emptyset$. %the claim.

Finally, we show that $G^{-1}( \mathring{T} ) \cap  \mathbb{U}$ is nonempty. Let $Y$ be a closed convex polygon such that $\tir{\Omega^*}$ is contained in the interior of $Y$ and $\int_{Y \setminus \Omega^*} \tilde{R}(p) dp = |\Omega|$, where 
$ \tilde{R}>0$ is an extension of $R$ to $Y$. We have the compatibility condition $\int_{Y}  \tilde{R}(p) dp = \int_{\Omega}  \big( f(t) + 1 \big)dt$.

By Lemma \ref{exi-V-max}, there exists $w_h \in \mathcal{C}^{V_{max}}_0$
such that $w_h(x^1_h)=0$ and 
\begin{align*}
 \omega_{V_{max}}(  \tilde{R} ,w_h,\{\, x \,\}))  & =  \int_{C_{x}} \big( f(t) + 1 \big)dt, \forall x \in \Omega_h \\
 w_h(x) & = w_h( \Gamma(x)) + k_{Y}(x- \Gamma(x)), \, \forall x \in \mathcal{M}_h \setminus \Omega_h. 
\end{align*}
We have $ \mathcal{C}^{V_{max}}_0 \subset  \mathcal{C}^{V }$ and $ \omega_{V }(R,w_h,\{\, x \,\}) 
=  \omega_{V }(\tilde{R},w_h,\{\, x \,\}) \geq  \omega_{V_{max}}( \tilde{R} ,w_h,\{\, x \,\}) \geq |C_x| >0$ for all 
$x \in \Omega_h$, where we used $\partial_V w_h(\Omega_h) \subset \Omega^*$ (Lemma \ref{inc-sub-lem}) and $R=\tilde{R}$ on $\Omega^*$. % since $f\geq 0$ on $\Omega$. 
We have for all 
$x \in \Omega_h$
\begin{align*}
 \int_{C_{x}} f(t)  dt +|C_x| -  \omega_{V }(R,w_h,\{\, x \,\}) \leq 0. 
\end{align*}
Let $\mu=\min \{ \, \int_{C_{x}} f(t)  dt +|C_x| -  \omega_{V }(R,w_h,\{\, x \,\}): x \in \Omega_h
\, \}$ and define $z_h$ by $z_h(x)=w_h(x)+( 2 |C_x|- \mu)/w$. Using $w \, w_h(x^1_h)=0$, we have for all $x \in \Omega_h$
%\begin{multline*}
$$
w z_h(x^1_h)= -\mu+2 |C_x| \geq  -\int_{C_{x}} f(t)  dt - |C_x|+  \omega_{V }(R,z_h,\{\, x \,\}) +2 |C_x|,
$$
%\end{multline*}
which gives
$$
w z_h(x^1_h) + \int_{C_{x}} f(t)  dt -  \omega_{V }(R,z_h,\{\, x \,\}) \geq  |C_x| >0.
$$
We conclude that $G_i(z_h) >0, i=1,\ldots,M$. Moreover, for $ x \in \mathcal{M}_h \setminus \Omega_h$
\begin{multline*}
-w_h(x) + w_h( \Gamma(x)) + k_{\Omega^*}(x- \Gamma(x)) = -w_h(x) + w_h( \Gamma(x)) \\
+ k_{Y}(x- \Gamma(x)) + k_{\Omega^*}(x- \Gamma(x)) - k_{Y}(x- \Gamma(x)) = 
 k_{\Omega^*}(x- \Gamma(x)) - k_{Y}(x- \Gamma(x)) <0,
\end{multline*}
where we used Lemma \ref{support}. We conclude that $G_i(z_h) <0, i=M+1,\ldots,N$. We have proved that $G^{-1}( \mathring{T} ) \cap  \mathbb{U}\neq \emptyset$.

\subsection{$C^1$ continuity of the mapping $G$}

Let $\# W$ denote the cardinality of the set $W$. Given $\lambda \in \R^{\# W }$, we write $\lambda = (\lambda_a)_{a \in W}$ by an abuse of notation, instead of the more familiar notation
$\lambda=(\lambda_j)_{j=1,\ldots,\# W}$. 

We fix $i$ in $\{ \, 1,\ldots, M \, \}$ and for $\lambda \in  \R^{\# V(x^i)}$, let 
$$
Q(\lambda) = \{ \, p \in \R^d, p \cdot e \leq \lambda_e, \ \forall e \in V(x^i) \, \},
$$
where for simplicity we do not mention the dependence of $Q(\lambda)$ on the index $i$.

Consider the mapping $S$ defined by $S(\lambda): = \int_{Q(\lambda)} R(p) d p$. 
For a given index $j$ we are interested in the variations of $G_i(v)$ with respect to $v(x^j)$, i.e. the derivative at $v(x^j)$ of the application which is the composite of $S$ and the mapping
$$
r \mapsto \lambda=\bigg( \frac{w(x^i+h a) -w(x^i)}{h}  \bigg)_{a \in V(x^i)},
$$
where  $w$ is the mesh function defined by
\begin{align*}
w(x) = v(x), x \neq x^j, \quad w(x^j) = r.
\end{align*}

Let  $z_i \in \R^d, i=1,\ldots,N$. Let $(r_1,\ldots,r_N)$ denote the canonical basis of $\R^N$. Given $\lambda \in \R^N$, define
$$
\tilde{Q}(\lambda) := \{ \, x \in \R^d,  x \cdot z_i \leq \lambda_i, i =1,\ldots,N \, \}.
$$
Assume that the vectors $z_i$ are chosen such that the polytope $\hat{Q}(\lambda)$ is bounded. We will need the following  lemma  \cite[Lemma 16]{Meyron2018}. 

\begin{lemma} \label{mey2}
Let $\rho: \R^d \to \R$ be a continuous function. Define
$$
\tilde{S}(\lambda): = \int_{\tilde{Q}(\lambda)} \rho(p) d p
$$

\begin{enumerate}

\item If $z_i \neq 0$ for all $i$, $\tilde{S}$ is continuous on $\R^d$. 

\item $\forall R \geq 0$, $\exists Q_R \subset \R^d$ compact such that $\forall \lambda' \in \R^N$, $\max_i | \lambda'  - \lambda| \leq R$ implies $\tilde{Q}(\lambda')  \subseteq Q_R$. 

\item There exists a function $\eta_R: \R^+ \to \R^+$ satisfying $\lim_{s \to 0} \eta_R(s) =0$ such that for all $x, y \in Q_R$, $|\rho(x) - \rho(y)| \leq \eta_R(||x-y||)$. 

\item Let $i_0 \in \{ \, 1,\ldots, N\, \}, \lambda \in R^N$ and $t\geq 0$. We have
$$
\frac{1}{t} (\tilde{S}(\lambda+ t \, r_{i_0}) - \tilde{S}(\lambda)) = \frac{1}{t} \int_0^t g_{i_0} (\lambda + s r_{i_0} ) ds,
$$
where $g_{i_0} (\tir{\lambda} ): = 1/||z_{i_0}|| \int_{\tilde{Q}(\tir{\lambda}) \cap \{ \, x \in \R^d, x \cdot z_{i_0} = \tir{\lambda}_{i_0} \, \}} \rho(p) dp.$

\item Let $\Pi_{i_0}$ denote the orthogonal projection onto the hyperplane orthogonal to $z_{i_0}$. Put
 $P_{i_0}(\lambda) = \Pi_{i_0} \big( \tilde{Q}(\lambda) \cap \{ \, x \in \R^d, x \cdot z_{i_0} = \lambda_{i_0} \, \}  \big)$. We have 
 $$
 P_{i_0}(\lambda) = \{ \, y \in \R^d, y \cdot z_{i_0} = 0 \text{ and } y \cdot \Pi_{i_0}(z_i) \leq \lambda_i -  \lambda_{i_0} z_i \cdot z_{i_0}/||z_{i_0}||^2, i \neq i_0   \, \}. 
 $$
 
 \item Define $\rho_{\lambda} (y) = \rho(y + \lambda_{i_0} z_{i_0}/||z_{i_0}||^2 )$. We have 
 $||z_{i_0}||  \, | g_{i_0} (\lambda ) -  g_{i_0} (\lambda' ) | \leq |A_1| + |A_2|$, where
 \begin{align*}
 A_1 & : = \int_{ P_{i_0}(\lambda) } (\rho_{\lambda} (y)  - \rho_{\lambda'} (y) ) dy \\
  A_2 & : = \int_{ P_{i_0}(\lambda) }  \rho_{\lambda'} (y) ) dy - \int_{ P_{i_0}(\lambda') }  \rho_{\lambda'} (y) ) dy. 
 \end{align*}
 
 \item As $\lambda' \to \lambda$, $|A_1| \to 0$ by (3) above and if the vectors $\Pi_{i_0}(z_i)$ are non zero, $|A_2| \to 0$ using (1) above, giving the $C^1$ continuity of $\tilde{S}$. 

\end{enumerate}
\end{lemma}

\begin{lemma} \label{Meyron}
Let $W$ be a set of non zero vectors which contains the canonical basis of $\mathbb{R}^d$ with the property that for $e$ and $f$ in $V$, $e=r f$ for a scalar $r$ if and only if $r=-1$. Assume furthermore that $W$ is symmetric with respect to the origin. Let $R$  be continuous on $\Omega^*$ and assume that for $\lambda \in \R^{\# W}, Q(\lambda)  \subset \Omega^*$. %Furthermore, assume that $\lambda_{-a} + \lambda_z \geq 0$. 
Then the mapping
$$
\lambda \mapsto  S(\lambda): = \int_{Q(\lambda)} R(p) d p,
$$
is $C^1$ continuous on $\{ \, \lambda \in \R^{\# W}, \lambda_{-a} + \lambda_a > 0, \forall a \in W \, \}$ with
$$
\frac{\partial S }{\partial  \lambda_a} = \frac{1}{||a||} \int_{Q(\lambda) \cap \{ \, p \in \R^d, p \cdot a = \lambda_a\, \} } R(p) d p.
$$
\end{lemma}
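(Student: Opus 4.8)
The plan is to compute the variation of the volume integral $\hat{G}(\lambda) = \int_{Q(\lambda)} R(p)\,dp$ directly, by analyzing how the polytope $Q(\lambda)$ changes as one coordinate $\lambda_a$ is perturbed. The key geometric observation is that when $\lambda_a$ increases by a small amount $t$, the halfspace $\{p : p\cdot a \le \lambda_a\}$ sweeps outward, and the region $Q(\lambda + t\,\mathbf{e}_a) \setminus Q(\lambda)$ is a thin slab adjacent to the facet $F_a(\lambda) := Q(\lambda) \cap \{p : p\cdot a = \lambda_a\}$. Its $d$-dimensional Lebesgue measure, to first order in $t$, is $\frac{t}{\|a\|}\,\mathcal{H}^{d-1}(F_a(\lambda))$, the factor $1/\|a\|$ coming from the fact that moving the level set of the linear functional $p\mapsto p\cdot a$ by $t$ in parameter corresponds to a geometric displacement of $t/\|a\|$ in the normal direction $a/\|a\|$. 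Multiplying by the density $R$ and using its continuity on the compact set $\overline{\Omega^*}$ (which contains $Q(\lambda)$ by hypothesis) to replace $R$ by its value on $F_a$ up to $o(1)$, one gets
$$
\hat{G}(\lambda + t\,\mathbf{e}_a) - \hat{G}(\lambda) = \frac{t}{\|a\|}\int_{F_a(\lambda)} R(p)\,d\mathcal{H}^{d-1}(p) + o(t),
$$
which is exactly the claimed partial derivative. The same slab estimate with the opposite sign handles $t < 0$.

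**Next I would address continuity of the partial derivatives**, which is what upgrades differentiability to $C^1$. Here the hypothesis that $V$ has no two elements that are positive scalar multiples of each other (only the antipode $e = -f$ is allowed) is essential: it guarantees that, on the open set $\{\lambda : \lambda_{-a} + \lambda_a > 0\ \forall a\in V\}$, the facet associated to direction $a$ is genuinely a $(d{-}1)$-dimensional face and that the facet-to-direction correspondence is stable, so that no facet degenerates or collapses under small perturbations of $\lambda$; the condition $\lambda_{-a}+\lambda_a > 0$ ensures $Q(\lambda)$ has nonempty interior and that the pair of parallel constraints in directions $a,-a$ does not pinch off. Under these conditions $F_a(\lambda)$ varies continuously (in Hausdorff distance, and with continuously varying $\mathcal{H}^{d-1}$-measure) as $\lambda$ moves, and since $R$ is uniformly continuous on $\overline{\Omega^*}$, the map $\lambda \mapsto \int_{F_a(\lambda)} R\,d\mathcal{H}^{d-1}$ is continuous. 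This is where I would invoke the "slight generalization of \cite[Lemma 16]{Meyron2018}" framing: the original statement presumably assumed the directions were pairwise linearly independent or came from a Delaunay/power-diagram setup; the generalization is precisely to allow the antipodal pairs forced on us by the symmetry of $V$, and the proof is unchanged except that one tracks facets indexed by $V$ rather than by an abstract index set, being careful that $F_a$ and $F_{-a}$ are disjoint parallel facets.

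**The main obstacle** I expect is the careful bookkeeping of degenerate configurations at the boundary of the admissible region and, more subtly, ruling out the possibility that for some $\lambda$ in the open admissible set the facet $F_a(\lambda)$ has lower dimension (e.g., is a single point or empty) even though $\lambda_{-a}+\lambda_a>0$ — in which case the derivative formula still holds but one must check the right-hand side is continuous through such configurations, where $\mathcal{H}^{d-1}(F_a)$ passes through zero. The clean way to handle this is to note that $\lambda \mapsto \mathcal{H}^{d-1}(F_a(\lambda))$ is continuous regardless (it is, after all, a section of a continuously varying convex body), so the formula for $\partial\hat G/\partial\lambda_a$ is continuous whether or not $F_a$ is full-dimensional; the differentiability then follows from the slab estimate, which is uniform because $R$ is bounded. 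I would organize the write-up as: (i) fix $\lambda$ in the admissible set and a direction $a$; (ii) establish the one-sided slab volume estimates and pass $R$ through them; (iii) deduce the derivative formula; (iv) invoke continuity of $\lambda\mapsto F_a(\lambda)$ and uniform continuity of $R$ to conclude the derivative is continuous, hence $\hat G \in C^1$. The reduction to the cited lemma means steps (i)–(iii) can largely be quoted, with the novelty confined to checking the argument survives the presence of antipodal direction pairs, which it does because antipodal facets are parallel and disjoint and thus never interact.
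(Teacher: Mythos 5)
Your slab/Fubini computation of the derivative is sound, and your instinct that the antipodal pairs in $V$ are the only new difficulty matches the paper. The genuine gap is in the $C^1$ half, i.e.\ the continuity of $\lambda\mapsto \frac{1}{\|a\|}\int_{F_a(\lambda)}R\,d\mathcal{H}^{d-1}$, which is the actual content of the lemma. Two of the claims you lean on are false on the admissible set $\{\lambda_a+\lambda_{-a}>0\ \forall a\in V\}$: the condition $\lambda_a+\lambda_{-a}>0$ does \emph{not} ensure that $Q(\lambda)$ has nonempty interior, nor that ``no facet degenerates or collapses under small perturbations''; constraints coming from non-antipodal directions can still conflict, so $Q(\lambda)$ may be empty and $F_a(\lambda)$ may be empty or of dimension less than $d-1$ anywhere in that set (take $d=2$, $V\supset\{\pm r_1,\pm r_2,\pm(1,1)\}$, $\lambda_{r_1}=\lambda_{r_2}=0$, $\lambda_{-r_1}=\lambda_{-r_2}=1$, $\lambda_{(1,1)}=4$, $\lambda_{-(1,1)}=-3$: all antipodal sums are $1>0$ but $Q(\lambda)=\emptyset$). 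Consequently Hausdorff continuity of $\lambda\mapsto F_a(\lambda)$ is not available, and your fallback --- ``$\lambda\mapsto\mathcal{H}^{d-1}(F_a(\lambda))$ is continuous regardless (it is, after all, a section of a continuously varying convex body)'' --- is an assertion, not an argument: $Q(\lambda)$ need not be a convex body, and continuity of the measure of the \emph{extreme} section of a possibly degenerate polytope is exactly the point at issue, and exactly where the hypotheses on $V$ must enter.

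A correct way to close this, and in effect what the paper does, is to work inside the hyperplane $\{p\cdot a=\lambda_a\}$: identify $F_a(\lambda)$ with $\tilde Q(\tilde\lambda)=\{q\in a^{\perp}:\ q\cdot\tilde e\le \tilde\lambda_e,\ e\in V\setminus\{a\}\}$, where $\tilde e$ is the orthogonal projection of $e$ onto $a^{\perp}$ and $\tilde\lambda_e=\lambda_e-\lambda_a\, e\cdot a/\|a\|^2$. For $e=-a$ one gets $\tilde e=0$ but $\tilde\lambda_{-a}=\lambda_a+\lambda_{-a}>0$, so that constraint is vacuous and can be discarded (this, not nondegeneracy of the facet, is the only role of the admissibility condition); for every other $e$, $\tilde e\neq 0$ precisely because no element of $V$ is a positive multiple of $a$. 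Continuity of $\tilde\lambda\mapsto\int_{\tilde Q(\tilde\lambda)}R\,d\mathcal{H}^{d-1}$ then follows from dominated convergence in the compact set $\overline{\Omega^*}$ (suitably translated): upper semicontinuity because the constraints are closed, lower semicontinuity because the exceptional set is contained in the finitely many hyperplanes $\{q\cdot\tilde e=\tilde\lambda_e\}$, which have $\mathcal{H}^{d-1}$-measure zero since $\tilde e\neq0$; no full-dimensionality of $F_a(\lambda)$ is ever needed. This is precisely the reduction the paper quotes from \cite[Lemma 16]{Meyron2018}, its own proof consisting only of the observation about the vacuous antipodal constraint. So your direct route (reproving the cited lemma via the slab estimate) is viable, but as written the continuity step --- the heart of the $C^1$ claim --- is not established, and the justifications offered for it would not survive the degenerate configurations that genuinely occur in $\mathbb{V}_{\epsilon}$'s ambient parameter set.
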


\begin{proof}
Since $W$ contains the canonical basis of $\mathbb{R}^d$, the polytope $Q(\lambda)$ is bounded. Let $e, a \in W$ such that $e \neq a$. Let $\Pi_a$ denote the orthogonal projection onto the hyperplane orthogonal to $a$. By Lemma \ref{mey2} we have
\begin{multline*}
 \Pi_a \big( Q(\lambda) \cap \{ \, x \in \R^d, x \cdot a = \lambda_{a} \, \} 
 \big) = \\
 \{ \, y \in \R^d, y \cdot a = 0 \text{ and } y \cdot \Pi_{a}(e) \leq \lambda_e -  \lambda_a \, e \cdot a/ ||a||^2, e \neq a   \, \}. 
 \end{multline*}
 
 If $e=-a$, $ \lambda_{-a} -  \lambda_a \, (-a) \cdot a/ ||a||^2= \lambda_{-a} + \lambda_a > 0$ by assumption. We then have trivially $y \cdot \Pi_{a}(-a) = 0 <  \lambda_{-a} + \lambda_a$. Thus
 \begin{multline*}
 \Pi_a \big( Q(\lambda) \cap \{ \, x \in \R^d, x \cdot a = \lambda_{a} \, \} 
 \big) = \\
 \{ \, y \in \R^d, y \cdot a = 0 \text{ and } y \cdot \Pi_{a}(e) \leq \lambda_e -  \lambda_a \, e \cdot a/ ||a||^2, e \notin \{ \, a, -a \, \}   \, \}. 
 \end{multline*}
 
 If $e \notin \{ \, a, -a \, \} $, $e$ and $a$ are independent which implies $\Pi_{a}(e) \neq 0$. The $C^1$ continuity of $S$ then follows from Lemma \ref{mey2}
\end{proof}

Recall that for $x \in \Omega_h$, $V(x)$ satisfies the assumptions on $W$ in Lemma \ref{Meyron}. Define for $v \in \mathbb{U}_{}$
$$
\lambda_{ha}(v)(x) = \frac{v(x+ h a) - v(x)}{h}.  
$$
Recall that $\partial_V v(x) = \{ \, p \in \R^d, p \cdot (h e) \leq \lambda_{h e}(v)(x), \forall e \in V(x)  \, \}$. Thus, if we put 
$\lambda \equiv (\lambda_{h e}(v)(x))_{e \in V(x)}$ we have
\begin{equation} \label{qlsubd}
\partial_V v(x) = Q( \lambda).
\end{equation}
We omit the dependence of $\lambda$ on $x$ as it will be clear from the context. 
\begin{thm} \label{C1-continuous}
The mapping $G$ is $C^1$ continuous on $\mathbb{U}_{0}$ for $R$ continuous on $\Omega^*$.  
\end{thm}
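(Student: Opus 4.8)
The plan is to deduce the $C^1$ regularity of $G$ on $\mathbb{V}_{\epsilon}$ from Lemma \ref{Meyron} by composing the smooth map $\hat G$ of that lemma with an affine (hence $C^\infty$) map in the mesh values, and then checking that all hypotheses of the lemma are met at every point of $\mathbb{V}_{\epsilon}$. First I would fix $i \in \{2,\dots,M+1\}$ and recall that, by the definition \eqref{Gg} of $G_i$, together with the fact that $G_1 = \int_{\Omega^*} R(p)\,dp - \sum_{i\geq 2} G_i$, it suffices to prove that each $G_i$, $i\geq 2$, is $C^1$ as a function of the mesh values $(v(x^j))_{j=1,\dots,M+1}$ (the value $v(x^1)=\alpha$ being a fixed parameter). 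The key observation is that $G_i(v) = \int_{\partial_V v(x^i)} R(p)\,dp = \hat G(\lambda(v))$, where, using the defining inequalities of $\partial_V v(x^i)$, namely $p\cdot(he) \geq v(x^i)-v(x^i-he)$ for all $e\in V$, we have $\partial_V v(x^i) = Q(\lambda)$ with $\lambda_a := \big(v(x^i) - v(x^i - ha)\big)/h$ for $a\in V$ (here I use that $V$ is symmetric, so the "backward'' differences along $a$ are the "forward'' differences along $-a$, and the constraint $p\cdot(he)\geq v(x^i)-v(x^i-he)$ rewrites as $p\cdot(-e)\leq (v(x^i-he)-v(x^i))/h$).

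Second I would verify the hypotheses of Lemma \ref{Meyron} pointwise on $\mathbb{V}_{\epsilon}$. The set $V$ consists of vectors with co-prime coordinates and is symmetric about the origin, so for $e,f\in V$ one has $e = rf$ with $r$ a scalar only when $r=\pm 1$; moreover $e=f$ is excluded when we regard $a$ and $-a$ as the two relevant directions, so the "$r=-1$'' normalization in the statement of the lemma is exactly the situation here, and $V$ spans $\R^d$ by assumption. The hypothesis $Q(\lambda)\subset\Omega^*$ holds because, for $v\in\mathbb{V}_{\epsilon}$, Lemma \ref{inc-sub-lem} (or rather the argument underlying it) gives $\partial_V v(x^i)\subset\Omega^*$; more directly, for $v\in\mathbb{V}_{\epsilon}$ we have $\omega_a(R,v,\{x^i\}) > \epsilon > 0$, hence $Q(\lambda)=\partial_V v(x^i)$ has positive measure and, being contained in the region where $R>0$, lies in $\tir{\Omega^*}$. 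Finally, the condition $\lambda_{-a}+\lambda_a > 0$ for all $a\in V$ is precisely $\Delta_{h a} v(x^i)/h > 0$, which holds for every $i\geq 2$ and every $a\in V$ when $v\in\mathbb{V}_{\epsilon}$, as noted in the paragraph preceding the theorem (the strict inequality comes from $G_i(v)>-\epsilon$ forcing $\omega_a(R,v,\{x^i\})>\epsilon>0$, which is impossible if any $\Delta_{ha}v(x^i)$ vanished). Thus Lemma \ref{Meyron} applies at every $v\in\mathbb{V}_{\epsilon}$, giving that $\hat G$ is $C^1$ on an open neighbourhood (in $\R^{\#V}$) of $\lambda(v)$.

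Third I would assemble the chain rule. The map $v \mapsto \lambda(v)$, $\lambda_a = (v(x^i)-v(x^i-ha))/h$, is linear in the mesh values; when $x^i - ha \notin \Omega_h$, the value $v(x^i-ha)$ is itself given by the extension formula \eqref{extension}, but once the index $\Gamma(x^i-ha)$ has been fixed (as assumed for the implementation), that formula is again affine in the mesh values $(v(x^j))_{j}$ — it is a fixed minimum over $y\in\partial\Omega_h$ followed by a fixed maximum over $j$, and locally near a point of $\mathbb{V}_{\epsilon}$ where the selection is made this is just a fixed affine function. Hence $v\mapsto\lambda(v)$ is $C^\infty$, and $G_i = \hat G\circ\lambda$ is $C^1$ on $\mathbb{V}_{\epsilon}$ as a composition; summing, $G_1$ is $C^1$ too, so $G = (G_i)_{i=2,\dots,M+1}$ is $C^1$ on $\mathbb{V}_{\epsilon}$. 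I expect the main obstacle to be the bookkeeping around the extension formula: one must argue that near any $v\in\mathbb{V}_{\epsilon}$ the argmin sets $\Gamma(x^i-ha)$ can be taken locally constant, or else that the two formulas defining $v_h$ outside $\Omega_h$ agree on overlaps so that $v\mapsto v_h(x)$ is a genuine $C^1$ (indeed piecewise-affine continuous, hence Lipschitz and differentiable where the selection is fixed) function — this is the only place where the discretization's geometry, rather than Lemma \ref{Meyron}, does real work, and it should be handled by invoking the fixed-selection assumption stated just after the definition of $\Gamma$.
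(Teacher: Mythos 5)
Your proposal follows essentially the same route as the paper's proof: identify $\partial_V v(x^i)$ with $Q(\lambda)$ for $\lambda$ depending affinely on the mesh values, check $Q(\lambda)\subset\Omega^*$ (as in Lemma \ref{inc-sub-lem}) and $\lambda_a+\lambda_{-a}>0$ for $v\in\mathbb{V}_{\epsilon}$, and conclude by Lemma \ref{Meyron} and the chain rule; the paper does exactly this with $\lambda_a(v)(x)=\bigl(v(x+ha)-v(x)\bigr)/h$. One slip to fix: your displayed choice $\lambda_a=\bigl(v(x^i)-v(x^i-ha)\bigr)/h$ has the wrong orientation --- with it $Q(\lambda)\neq\partial_V v(x^i)$ and $\lambda_a+\lambda_{-a}=-\Delta_{ha}v(x^i)/h<0$ --- whereas the forward difference, which your own parenthetical rewriting actually produces, is the correct one and makes the rest of your verification consistent. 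Your attention to the extension formula \eqref{extension} and the $\Gamma$-selection is more than the paper's proof records (the paper invokes the lemma directly, and the selection enters only in the Jacobian formulas \eqref{j1}--\eqref{j2}), so flagging it as the remaining bookkeeping point is reasonable rather than a defect.
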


\begin{proof} 
%Arguing as in , 
By Lemma \ref{inc-sub-lem}, for $v \in \mathbb{U}_{0}$, $\partial_V v(x) \subset \Omega^*$ for all $x \in \Omega_h$. Since elements of $\mathbb{U}_{0}$ are strictly $V$-discrete convex, we have for $v \in \mathbb{U}_{0}$, $\lambda_{ha}(v)(x) + \lambda_{-ha}(v)(x) >0$ for all $x \in \Omega_h$ and $a \in V(x)$. We note that the mapping $v_h \mapsto v_h(x^1)$ is  $C^1$ continuous. On the other hand, the mapping
$v_h \mapsto  \omega_V(R,v_h,\{\, x^i \,\})$ is the composite of $S$ and the mapping $v_h \mapsto  \bigg( \frac{v_h(x^i+h a) -v_h(x^i)}{h}  \bigg)_{a \in V(x^i)}$. By Lemma \ref{Meyron} the functional $G_i$ is $C^1$ continuous on $\mathbb{U}_{0}$, 
$i=1,\ldots,M$. The $C^1$ continuity of $G_i$ for $i=M+1,\ldots,N$ is immediate. This concludes the proof. 
\end{proof}

Given $i \in \{ \, 1, \ldots, M \, \}$, $\partial_V v(x^i)$ and hence $\omega_V(R,v,\{\, x^i \,\})$ depends on $x^i, x^i+h e, e \in V(x^i)$. 
We have for $i, j \in \{ \, 1, \ldots, N \, \}$
\begin{multline} \label{j1}
%\begin{split}
%\item 
\text{if } x^j \notin \{ x^i + h e, e \in V(x^i) \}  \text{ and } j \neq i, \\
 \partial \omega_V(R,v,\{\, x^i \,\}) /\partial  v(x^j)  =0.
% \end{split}
\end{multline}

For $e \in V(x^i)$ 
\begin{align} \label{j4}
%\item 
%$$
\frac{\partial \omega_V(R,v,\{\, x^i \,\}) }{\partial v(x^i + h e)} =  \frac{1}{h ||e||} \int_{Q(\lambda) \cap \{ \, p \in \R^d, p \cdot (h e) = \lambda_{h e}\, \} } R(p) d p. 
%$$\item $$
\end{align}

Next, 
\begin{align} \label{j5}
\frac{\partial \omega_V(R,v,\{\, x^i \,\})   }{\partial v(x^i)} = - \sum_{e \in V(x^i)} \frac{1}{h ||e||} \int_{Q(\lambda) \cap \{ \, p \in \R^d, p \cdot (h e) = \lambda_{h e}\, \} } R(p) d p.
%$$
%\end{split}
\end{align}
%where $\lambda_a = (v(x^i+ h a) - v(x^i))/h$. 

Put
$$
L_e = \frac{1}{h ||e||} \int_{Q(\lambda) \cap \{ \, p \in \R^d, p \cdot (h e) = \lambda_{ h e}\, \} } R(p) d p.
$$
For $x \in \Omega_h$  let us denote by $\tir{V}(x)$ the minimal subset of $V(x)$ such that
$$
\partial_V v_h(x) = \{ \, p \in \R^d, p \cdot (h e) \leq v_h(x+he) - v_h(x), \forall e \in \tir{V}(x)  \, \}. 
$$
Thus, if $e \in \tir{V}(x)$, $L_e \neq 0$.

\begin{rem} \label{non-zero-diag}
If $\partial \omega_V(R,v,\{\, x^i \,\}/ \partial v(x^i) =0$, then as $R>0$, all facets of the polygon $Q(\lambda)=\partial_V v_h(x^i)$ have Lebesgue measure 0, and hence $\omega_V(R,v,\{\, x^i \,\})=0$.
%By \eqref{j5} and \eqref{qlsubd}, 
\end{rem}

%We will prove that the Jacobian matrix of $G$ is invertible as a weakly chained diagonally dominant matrix, a notion we now recall.

\subsection{Invertibility of the Jacobian of $G$ for $w\neq 0$}

Let us first recall some results on matrices with the structure of the Jacobian of $G$.

The directed graph of a $N \times N$ matrix $A$ is the graph with set of vertices $I=\{ \,1,\ldots,N \, \}$ and edges defined as follows: there exists an edge from $i$ to $j$ if and only if $A_{ij} \neq 0$.

The directed graph of $A$ is said to be connected if there is a path, or sequence of edges, between any two vertices in $\{ \, 1,\ldots, N \, \}$. It is known that if the  directed graph of $A$ is connected, the kernel of $A$ is one dimensional \cite{caughman2006kernels}. 

\begin{thm} \label{det0}
For $w=0$ the $N \times N$ matrix $B=G'(v_h)$ has the following structure: 
$$
B=\begin{pmatrix}
B^{11} & B^{12} \\
B^{21} & B^{22}
\end{pmatrix},
$$
where $B^{11}$ is a $M\times M$ matrix, $B^{12}$ is a $M \times (N-M)$ zero matrix, $B_{21}$ is a $(N-M) \times M$ matrix with entries 0 or 1 with each row containing at most one non zero entry and finally the $(N-M) \times (N-M)$ matrix $B^{22}$ is the negative of the identity matrix.

At each $v$ in $\mathbb{U}_{ 0}$ we have $B_{i i} < 0$, $B_{i j} \geq 0, j \neq i$ and $|B_{ii}| = \sum_{j \neq i} |B_{ij}|$, $i,j=1,\ldots,N$. 

%The direct graph of $B$  is connected. 
%Let $E$ denote the submatrix of $B$ obtained by removing its first row and its first column.  At each $v$ in $\mathbb{U}_0$,  $E$ is a weakly chained diagonally dominant matrix.  
\end{thm}

\begin{proof}
%Put $A=G'(v)$ and 
Note that the entries of $B_{ij}, i=1,\ldots,M, j=1,\ldots,N$ are computed from \eqref{j1}--\eqref{j5}. For $i=M+1,\ldots,N$, $G_i(v_h)$ depends only on $v_h(x^i)$ and $v_h(y), y = \Gamma(x^i)$, with $\partial G_i(v)/(\partial v(x^i)) = -1$ and $\partial G_i(v)/(\partial v(y)) = 1$. This also shows that $B_{i i} \leq 0$ and $B_{i j} \geq 0, j \neq i$, $i,j=1,\ldots,N$ with $|B_{ii}| = \sum_{j \neq i} |B_{ij}|$, $i,j=1,\ldots,N$.

By Remark \ref{non-zero-diag}, $B_{ii} \neq 0$ for all $i$ since $\omega_V(R,v_h,\{\, x \,\})>0$ for all $x \in \Omega_h$ when $v_h$ in $\mathbb{U}_{ 0}$. 
Thus $B_{i i} < 0$. 
\end{proof}

\begin{lemma} \label{to-p-invert}
Let $t \in \R^J$ such that $t_i=1, i =1,\ldots,J$ and let $s \in \R^M$ such that $s_1=1$, $s_i=0, i \neq 1$. 
If $w \neq 0$ and $H$ is a $J\times M$ matrix with a one dimensional kernel spanned by $t$, then $H + w \, ts^T$ has a trivial kernel.
\end{lemma}

\begin{proof}
Let $x\in \R^M$ and put $x=\alpha t + y$ for $\alpha \in \R$ and $t$ in the range of $H$. We have
\begin{align*}
(H + w \, t s^T) x = (H + w \, t s^T) (\alpha t + y) = H y + w \, (\alpha s^T t + s^T y ) t, %\alpha w (s^T r) \, r
\end{align*}
where we used $H t=0$. We note that $\alpha s^T t + s^T y$ is a scalar. If  $(H + w \, t s^T) x =0$, then 
$$H y = -  w \, (\alpha s^T t + s^T y ) t,
$$ 
is both in the kernel and range of $H$. Thus $H y=0$ and then $y$ is both in the kernel and range of $H$. We conclude that $y=0$.  This gives by the above equation $w \, (\alpha s^T t )=0$.  Since $w\neq 0$ and $s^T t=1$, we obtain $\alpha=0$. It follows that $x=0$ and $H + w \, t s^T$ has a trivial kernel. 
\end{proof}

\begin{thm} \label{det}
At each $v$ in $\mathbb{U}_{ 0}$ the matrix $G'(v) $ is invertible for $w  \neq 0$. 
\end{thm}

\begin{proof}
 Put $A=G'(v)$. The $N\times N$ matrix $A$ has the following structure: 
$$
A=\begin{pmatrix}
A^{11} & A^{12} \\
A^{21} & A^{22}
\end{pmatrix},
$$
where $A^{11}$ is a $M\times M$  matrix, $A^{12}$ is a $M \times (N-M)$ zero matrix, $A^{21}$ is a $(N-M) \times M$ matrix with entries 0 or 1 with each row containing at most one non zero entry and finally the $(N-M) \times (N-M)$ matrix $A^{22}$ is the negative of the identity matrix $I^d$. Since $A^{12}$ is a zero matrix and $A^{22}=-I^d$, we have
\begin{equation*} %\label{det-a}
\det(A) = \pm \det(A^{11}),
\end{equation*}
by expanding the determinant along the last $M-N$ columns of $A$.

We have $A^{11} = B^{11} + w \, t s^T$ where the matrix $B$ is defined in Theorem \ref{det0} and the vectors $t \in \R^M$ and $s$ are defined in Lemma \ref{to-p-invert}. 

Next, let $I_1, \ldots, I_k$ be a partition of $\{\,1,\ldots,M \, \}$ such that for all $i, j$ in $I_l, l=1,\ldots,k$, there is path from row $i$ to row $j$ of $A^{11}$. Let $H_{l}, l=1,\ldots,k$ be the matrix which rows are the corresponding rows of $A^{11}$ indexed by $I_l$. Put $J_l= \# I_l$ and let $t_l \in \R^{J_l}$ have all entries equal to 1. Upon rearranging the rows of $A^{11}$, we may write for $x\in \R^m$,
$x=[x_1,\ldots,x_k]$ with $x_l \in \R^{J_l}$ for all $l$. 

We have by construction $H_l t_l=0$ for all $l$ and the direct graph of $H_l$ is connected. Thus $H_l$ has a one dimensional kernel spanned by $t_l$. Assume now that $A^{11} x=0$ for $x \in \R^M$. We have $(H_l+ w  \, t_l s^T) x_l=0$ for all $l$. By Lemma \ref{to-p-invert}, $x_l=0$ and thus $x=0$. We conclude that $A^{11}$ is invertible. This completes the proof. 
\end{proof}

\subsection{Proof of Theorem  \ref{main0} }

\begin{thm} \label{proper}
The mapping $G: \mathbb{U}_{0} \to \R^N$ is proper for $w \neq 0$.  
\end{thm}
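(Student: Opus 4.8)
The plan is to show that the preimage $G^{-1}(\mathcal{K})$ of an arbitrary compact set $\mathcal{K} \subset \R^M$ is both closed and bounded in $\mathbb{V}_\epsilon$, hence compact. Closedness is immediate from continuity of $G$ (Theorem \ref{C1-continuous}), so the heart of the matter is boundedness. Let $(v_h^n)_n$ be a sequence in $\mathbb{V}_\epsilon$ with $G(v_h^n)$ lying in a compact set; I must produce a uniform bound on $\max_{x\in\Omega_h}|v_h^n(x)|$, which by the identification $\mathcal{A}$ is equivalent to a bound on the $\R^M$-norm. The normalization $v_h^n(x^1)=\alpha$ for all $n$ fixes the additive constant, so it suffices to bound the oscillation of $v_h^n$ on $\Omega_h$.

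First I would observe that each $v_h^n$ is (essentially) discrete convex: since $v_h^n \in \mathbb{V}_\epsilon$, we have $\omega_a(R,v_h^n,\{x^i\}) = G_i(v_h^n) + \int_{C_{x^i}} f > \epsilon > 0$ for $i=2,\ldots,M+1$, hence $\Delta_{he}v_h^n(x^i)>0$ for all such $i$ and all $e\in V$; and as in the proof of Theorem \ref{related-u}, positivity of the measure at $x^1$ forces $\Delta_{he}v_h^n(x^1)\ge 0$ as well, so in fact $v_h^n\in\mathcal{C}_h$ after all, with asymptotic cone $K_{\Omega^*}$ (the extension \eqref{extension} is built in). Now Lemma \ref{pre-lip-lem} applies verbatim: discrete convex mesh functions with asymptotic cone $K_{\Omega^*}$ are uniformly bounded on $\Omega_h$. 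Combined with the normalization at $x^1$, this gives $\sup_n \|v_h^n\| < \infty$. Therefore $G^{-1}(\mathcal{K})$ is bounded.

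It remains to note that the limit of a convergent subsequence stays in $\mathbb{V}_\epsilon$. A priori a limit point $v_h$ of $G^{-1}(\mathcal{K})$ satisfies only $G_i(v_h)\ge-\epsilon$, i.e. $v_h\in\tir{\mathbb{V}_\epsilon}$. However, the statement as phrased treats $G$ as a map on $\mathbb{V}_\epsilon$, and what the convergence proof in Theorem \ref{damped-th} actually uses is precisely that the sublevel sets $\{v\in\mathbb{V}_\epsilon : \|G(v)\|\le c\}$ have compact closure together with the fact that the limit is a zero of $G$ (handled separately via uniqueness in Theorem \ref{related-u}); closedness of $G^{-1}(\mathcal{K})$ in $\mathbb{V}_{2\epsilon}$, the slightly larger open set on which $G$ is $C^1$, is again just continuity. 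So I would phrase the conclusion as: $G$ is proper as a map $\mathbb{V}_{2\epsilon}\to\R^M$, which is what Theorem \ref{damped-th} requires.

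The main obstacle is making sure that elements of $\mathbb{V}_\epsilon$—which by definition need not be discrete convex at $x^1$—are actually covered by Lemma \ref{pre-lip-lem}. The resolution is the observation above that the compatibility/mass-conservation identity \eqref{mass-conservation} forces $\omega_a(R,v_h^n,\{x^1\})=\int_{C_{x^1}}f>0$ whenever $G_i(v_h^n)=0$ for $i>1$; but for a general element of $\mathbb{V}_\epsilon$ we only know $G_i>-\epsilon$, so $\omega_a(R,v_h^n,\{x^1\})$ could in principle be small or negative-looking. One must instead use that $\partial_V v_h^n(x^1)$ is automatically nonempty by the symmetry of $V$ (the argument in Theorem \ref{related-u}: pick $p$ with $v_h^n(x^1)-v_h^n(x^1-he)\le p\cdot(he)\le v_h^n(x^1+he)-v_h^n(x^1)$ for each $e$, which exists because the lower bounds are dominated by the upper bounds thanks to $\Delta_{he}v_h^n(x^i)>0$ propagating, or directly from discrete convexity at neighboring points), giving $\Delta_{he}v_h^n(x^1)\ge0$. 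Hence every $v_h^n\in\mathbb{V}_\epsilon$ is genuinely in $\mathcal{C}_h$, Lemma \ref{pre-lip-lem} applies, and boundedness follows.
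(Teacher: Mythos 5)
Your strategy (closedness from continuity, boundedness via Lemma \ref{pre-lip-lem}) is the same as the paper's, but the step you insert to make Lemma \ref{pre-lip-lem} applicable contains a genuine gap. You claim that every $v_h\in\mathbb{V}_\epsilon$ is in fact in $\mathcal{C}_h$, because $\partial_V v_h(x^1)$ is ``automatically nonempty by the symmetry of $V$'' or because discrete convexity at the neighbouring points propagates to $x^1$. Neither holds. Nonemptiness of $\partial_V v_h(x^1)$ already requires, for each $e\in V$, $v_h(x^1)-v_h(x^1-he)\le p\cdot (he)\le v_h(x^1+he)-v_h(x^1)$, hence $\Delta_{he}v_h(x^1)\ge 0$, which is exactly what you are trying to prove; and it is not implied by $\Delta_{he}v_h(x^i)\ge 0$ at the other points. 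In one dimension, a mesh function with $v(x^1)=\alpha+1$, $v(x^1\pm h)=\alpha$, continued convexly outward, has nonnegative second differences at every mesh point except $x^1$, where $\Delta_h v(x^1)=-2$; one can indeed produce such elements of $\mathbb{V}_\epsilon$ by taking the solution $u_h$ and subtracting a large constant at every point except $x^1$ (this only enlarges the subdifferentials at the points $x^i$, $i\ge 2$). The argument of Theorem \ref{related-u} is not available to you here: it uses $G_i(v_h)=0$ for \emph{all} $i\ge 2$, so that \eqref{mass-conservation} gives $\omega_a(R,v_h,\{x^1\})=\int_{C_{x^1}}f>0$ and hence a point $p\in\partial_V v_h(x^1)$. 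For a general $v_h\in\mathbb{V}_\epsilon$, or even for $v_h\in G^{-1}(K)$, the bound $G_i(v_h)>-\epsilon$ only controls $\sum_{i\ge 2}G_i(v_h)$ from below, which bounds $\omega_a(R,v_h,\{x^1\})=\int_{C_{x^1}}f-\sum_{i\ge 2}G_i(v_h)$ from \emph{above}, not from below; this quantity may vanish, $\partial_V v_h(x^1)$ may be empty, and $v_h$ need not be discrete convex at $x^1$.

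The paper's proof avoids the point $x^1$ instead of convexifying at it: set $\Omega'=\Omega\setminus\{x^1\}$ and $\Omega'_h=\Omega'\cap\mathbb{Z}^d_h$; every element of $\mathbb{V}$ is discrete convex on $\Omega'_h$ and has asymptotic cone $K_{\Omega^*}$ through \eqref{extension}, so Lemma \ref{pre-lip-lem}, applied on $\Omega'$, yields a uniform bound on all of $\mathbb{V}$ (hence on $\mathbb{V}_\epsilon$ and a fortiori on $G^{-1}(K)$). Compactness then follows from this boundedness together with closedness, which comes from continuity of $G$ (Theorem \ref{C1-continuous}), exactly as in your first paragraph. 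If you replace your ``convexity at $x^1$'' step by this punctured-domain use of Lemma \ref{pre-lip-lem}, the remainder of your argument, including your reasonable remark that limit points may only lie in $\tir{\mathbb{V}_{\epsilon}}$ and that this is what Theorem \ref{damped-th} actually consumes, goes through.
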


\begin{proof} %This is a consequence of Lemma \ref{inc-sub-lem}, i.e. discrete convex mesh functions $v$ with asymptotic cone $K_{\Omega^*}$ and which satisfy $v(x^1)=\alpha$ are uniformly bounded. Since $\mathbb{U}$ is bounded and $G$ continuous, it is proper.

Let $K$ be a compact subset of $\R^N$. Since $G$ is continuous by Lemma \ref{C1-continuous}, $G^{-1}(K)$ is closed. Because $K$ is bounded, there exists a constant $C$ such that for $v \in G^{-1}(K)$ we have $ || G(v)|| \leq C$. Since for $i=1,\ldots, M$, 
$G_i(v) = \omega_V(R, v ,\{\, x^i \,\})$ $- \int_{C_{x^i}} f(t) dt - w \, v(x^1) $ we obtain
$$
|w| \, | v(x^1) | \leq ||G(v)|| +  \omega_V(R, v ,\{\, x^i \,\}) +  \int_{C_{x^i}} f(t) dt \leq  ||G(v)|| +  \int_{\Omega^*} R(p) dp + \int_{\Omega} f(t) dt.
$$
It follows that for $v \in G^{-1}(K)$, we have $| v(x^1) | \leq C/|w|$ for a constant $C$. 
Using Lemma \ref{pre-lip-lem} we obtain for $x \in \Omega_h$ and $v \in \mathbb{U}_{0}$,
$|v(x)| \leq |v(x^1) | + |v(x) - v(x^1)| \leq C$. This shows that $G^{-1}(K)$ is a bounded subset of $ \mathbb{U}_{0}$. We concluded that $G^{-1}(K)$ is compact. 
\end{proof}

We can now give the proof of Theorem \ref{main0}. 

\begin{proof}[Proof of Theorem \ref{main0}]
The result follows immediately from Theorems \ref{damped-th},  \ref{C1-continuous}, 
\ref{det} and \ref{proper}. 
%Taking limits of solutions to \eqref{m2d3} as $w\to 0$, we obtain
\end{proof}

\subsection{Uniqueness of solutions}

Theorem \ref{main0} establishes the existence of a solution to \eqref{m2d3} in $\tir{\mathbb{U}}$. If the solution lies  in $\mathbb{U}_0$, by Hadamard’s global inverse function theorem, the mapping $G$ is a homeomorphism and hence the solution is unique. 

We now give a sufficient condition for a solution to be in $\mathbb{U}_0$. Assume that $f>0$ in $\Omega$ and $f \in C(\tir{\Omega})$, we show in section \ref{limit} below that $w \, u_h (x^1_h) \to 0$ as $h \to 0$ and $\kappa \to \infty$, where $\kappa$ measures the size of the stencil $V$. For $h$ sufficiently small, we have $w \, u_h (x^1_h) + \int_{C_x} f(t) dt >0$ for all $x \in \Omega_h$. Then $u_h \in \mathbb{U}_0$ and is the unique solution of  \eqref{m2d3}.

We conclude this section with additional remarks on uniqueness.

\begin{thm} \label{rem-uni}
Assume that $w \neq 0$. Let $u_h$ and $v_h$ be two solutions of \eqref{m2d3}.  Then $u_h(x^1_h)=v_h(x^1_h)$. 
\end{thm}

\begin{proof}
Assume for contradiction  that $u_h(x^1_h) > v_h(x^1_h)$. We distinguish cases based on the sign of $w$. 

If $w<0$, we have  $w u_h(x^1_h) < w v_h(x^1_h)$.
It follows that $ \omega_V(R,u_h,\{\, x \,\}) <  \omega_V(R,v_h,\{\, x \,\})$ for all $x \in \Omega_h$. Let $z \in \Omega_h$ such that $u_h(x) - v_h(x) \geq u_h(z) - v_h(z)$ for all $x \in \Omega_h$. By Lemma \ref{key-for-u} we have $ \omega_V(R, u_h,\{\, z\, \})  \geq  \omega_V(R, v_h,\{\, z\, \}) $. A contradiction. 

If $w>0$, we have  $w u_h(x^1_h) > w v_h(x^1_h)$ and  $ \omega_V(R,u_h,\{\, x \,\}) >  \omega_V(R,v_h,\{\, x \,\})$ for all $x \in \Omega_h$. Now we let $z \in \Omega_h$ such that $v_h(x) - u_h(x) \geq v_h(z) - u_h(z)$ for all $x \in \Omega_h$. This gives 
 $ \omega_V(R, v_h,\{\, z\, \})  \geq  \omega_V(R, u_h,\{\, z\, \}) $ and we get again a contradiction. 
 
 Thus $u_h(x^1_h) \leq v_h(x^1_h)$. Switching the roles of $u_h$ and $v_h$, we obtain $u_h(x^1_h)=v_h(x^1_h)$. 
\end{proof}

By Theorem \ref{rem-uni}, for two solutions $u_h$ and $v_h$ of \eqref{m2d3}, we have $u_h(x^1_h)=v_h(x^1_h)$ and thus 
$\omega_V(R, u_h ,\{\, x \,\}) = \omega_V(R, v_h ,\{\, x \,\})$ for all $x \in \Omega_h$. In particular, as shown in the numerical example in section \ref{num-exp}, if we have
$\omega_V(R, u_h ,\{\, x \,\}) = \omega_{V_{max}}(R, u_h ,\{\, x \,\})$ for all $x \in \Omega_h$, uniqueness is guaranteed, as then $u_h$ is equal to its convex envelope \cite{Awanou-second-sym}. 

\section{Limit equations } \label{limit} %as $w\to0$

In this section, we denote by $u_h^w$ the solution of \eqref{m2d3}. We study the convergence of $w u_h^w(x^1_h)$ as $h \to 0$.

%\subsection{Convergence of the discretization and limit of $w u_h(x^1_h)$}

\subsection{Weak solutions of \eqref{m2}}

Let $u$ be an Aleksandrov solution of \eqref{m1}. Since $\partial u(\Omega) = \Omega^*$ is bounded, $u$ is Lipschitz continuous on $\Omega$. The proof is analogous to the proof of \cite[Lemma 1.1.6]{Gutierrez2016}. It follows that $u\in C(\tir{\Omega})$. Thus $u(x^1)$ is well defined. 

A convex function $u$ on $\Omega$ is an Aleksandrov solution of \eqref{m2} if for all Borel sets $E\subset \Omega$,  
$\omega(R,u,E) =\int_{E} f(x) dx + w |E| u(x^1)$, and we recall that $|E|$ denotes the Lebesgue measure of $E$.

If $u$ is the unique Aleksandrov solution of \eqref{m1} with $u(x^1)=0$, $u$ is also an  Aleksandrov solution of \eqref{m2}. Conversely, if $u$ is an  Aleksandrov solution of \eqref{m2}, by the compatibility condition \eqref{necessary}, $u$ is the Aleksandrov solution of \eqref{m1} with $u(x^1)=0$. We conclude that  \eqref{m2} has a unique Aleksandrov solution. 

Next, we adapt the notion of $\kappa$-viscosity solution used in  \cite{Awanou-second-sym}. The condition number of a quadratic function $\phi$ is defined to be the condition number of the matrix $M$ such that $\phi(x) = x^T M x, x \in \R^d$. 

We define a function $u$ on $\Omega$ to be $\kappa$-locally convex if for all $x, y \in \Omega$ such that $||y-x|| \leq 1/2 \sqrt{d} \kappa$, we have for $\theta \in (0,1)$
$u(\theta x + (1-\theta) y) \leq \theta u(x) + (1-\theta) u(y)$. We refer to \cite{Mirebeau15,neilan2019monge} for the motivation for the factor $ 1/2 \sqrt{d}$ in proving consistency of discretizations for the Monge-Amp\`ere operator. 

\begin{defn}
A $\kappa$-locally convex function $u$ on $\Omega$ is a $\kappa$-viscosity solution of \eqref{m2}, if for all $x_0 \in \Omega$ and all strictly convex quadratic polynomials $\phi$ with condition number less than $\kappa$, the following holds
\begin{itemize}
\item[-] at each local maximum point $x_0$ of $u-\phi$, $w u(x^1) + f(x_0) \leq R(D \phi (x_0)) \det D^2 \phi(x_0)$
\item[-] at each local minimum point $x_0$ of $u-\phi$, $w u(x^1) + f(x_0) \geq R(D \phi (x_0)) \det D^2 \phi(x_0)$, if $D^2 \phi(x_0) \geq 0$, i.e. $D^2 \phi(x_0)$ has positive eigenvalues.
\end{itemize}
\end{defn}

A viscosity solution of \eqref{m2} is a $\kappa$-viscosity solution for all $\kappa >0$. As with Aleksandrov solutions of \eqref{m1}, for $f>0$ and $f\in C(\tir{\Omega})$, $u$ is a viscosity solution of  \eqref{m2} if and only if $u$ is an Aleksandrov solution of \eqref{m2}, c.f. \cite{Feida2013}. We will treat separately the asymptotic cone condition \eqref{extension-00}. 

\subsection{Converging subsequences}

For $\kappa >0$, define $V_{\kappa}$ to be a mesh independent stencil such that %$V_{min} \subset V_{\kappa} \subset V_{max}$ and $V_{\kappa}(x)$ contains all vectors $e \in V_{max}(x)$ 
$V_{\kappa}$ consists of all vectors $e \in \mathbb{Z}^d \setminus \{ \, 0 \, \}$ with co-prime coordinates and with $|e| \leq 1/2 \sqrt{d} \kappa$. The stencil $V_{\kappa}$ is used to prove consistency of the discretization of the Monge-Amp\`ere operator in \cite{Awanou-second-sym}.

Define
$$
\widetilde{\Omega}_{ext}  = \Omega_h \cup \{ \, x+ h e: x \in \Omega_h, e \in \{
\, r_1,\ldots, r_d \, \} \, \}, 
$$
and we recall that 
$\{\, r_1,\ldots, r_d \, \}$ denotes the canonical basis of $\R^d$. 
We have
 $$
\tir{\Omega} \subset \Conv(\widetilde{\Omega}_{ext}),
 $$
 where $\Conv( \widetilde{\Omega}_{ext} ) $ denotes the convex hull of $\widetilde{\Omega}_{ext}$. % and we recall that $\Omega_{ext}$ is the extended mesh, c.f. section \ref{preliminaries}.
 
 \begin{defn} \label{def-cv-mesh2}
%Let $u_h \in \mathcal{U}_h$ for each $h >0$. 
We say that $u_h$ converges to a function $u$ uniformly on $\tir{\Omega}$ if and only if for each sequence $h_k \to 0$ and for all $\epsilon >0$, there exists $h_{-1} >0$ such that for all $h_k$, $0< h_k < h_{-1}$, we have
$$
\max_{x \in\widetilde{\Omega}_{ext} } |u_{h_k}(x) - u(x)| < \epsilon.
$$
\end{defn}
In the above definition, one can use any extended domain $\widehat{\Omega}_{ext}$ for which $\tir{\Omega} \subset \Conv(\widehat{\Omega}_{ext})$. In \cite{Awanou-second-sym}, we used $\Omega_{ext}$. This allows to prove uniform convergence over $\tir{\Omega}$. We will prove the following theorem

\begin{thm} \label{final-thm-0002} % Let $\Omega$ be a rectangle. 
Assume that $u_{h,\kappa}$ is $V$-discrete convex and solves \eqref{m2d3} for $V(x)=V_{\kappa}  \cap V_{max}$  for all $x \in \Omega_h$ with %$u_{h,\kappa}(x^1_h)=\alpha$ for %a mesh point 
%$x^1_h$ in $\Omega_h$ %for all $h$ with 
%and 
$x^1_h \to x^1 \in \tir{\Omega}$. There is a subsequence $h_k$ such that  $u_{h_k,\kappa}$ converges uniformly on $\tir{\Omega}$ to a continuous $\kappa$-locally
convex function $v_{\kappa}$.
\end{thm}

Theorem \ref{final-thm-0002} improves on \cite[Theorem 23]{Awanou-second-sym} in that $\Omega$ is not assumed to be a rectangle. Let $\diam(\Omega)$ denote the diameter of $\Omega$.

\begin{thm} \label{boom} Assume that $u_h$ is $V$-discrete convex and $h\leq \diam(\Omega)/2$. 
There exists a constant $C$ which depends only on $\Omega^*$ such that for all $x, y \in \widetilde{\Omega}_{ext}$ we have
$$
|u_h(x) - u_h(y)| \leq C ||x-y|| + C h.
$$
As a consequence, for a triangulation $\mathcal{T}_h$ with set of vertices $\widetilde{\Omega}_{ext}$, the piecewise linear interpolant $I(u_h)$ of $u_h$ also satisfies the above approximate Lipschitz property. 
\end{thm}

\begin{proof}
By Lemma \ref{pre-lip-lem}, for all $x_1, x_2 \in \Omega_h$ we have $|u_h(x_1) - u_h(x_2)| \leq C_1 ||x_1-x_2|| $. 

Let $x_1, x_2 \in \widetilde{\Omega}_{ext} \setminus \Omega_h$. We have $x_i=y_i + h e_i, i=1,2 $ with $y_i \in \Gamma_h$ and $e_i \in \{
\, r_1,\ldots, r_d \, \}$. 

Let $i=1, 2$. We have $u_h(x_i) \leq u_h(y_i) + k_{\Omega^*}(x_i-y_i)$. Therefore, 
$$
u_h(x_i) - u_h(y_i) \leq k_{\Omega^*}(x_i-y_i) = k_{\Omega^*}( h e_i) = h k_{\Omega^*}(  e_i) . 
$$
Thus, if $u_h(x_i) - u_h(y_i) \geq 0$, 
$$
|u_h(x_i) - u_h(y_i)| \leq h | k_{\Omega^*}(  e_i) | \leq C_2 h,
$$ 
where $C_2 = \max \{\, |k_{\Omega^*}(e)|, e \in \{\, r_1,\ldots, r_d \, \} \, \}$. 

On the other hand, if $u_h(x_i) - u_h(y_i) \leq 0$, by $V$-discrete convexity, $u_h(x_i) - 2 u_h(y_i) + u_h(y_i-h e_i) \geq 0$ and thus
$u_h(x_i) - u_h(y_i)  \geq u_h(y_i) - u_h(y_i-h e_i)$. Since $h\leq \diam(\Omega)/2$, $y_i-h e_i \in \Omega_h$ and using again
Lemma \ref{pre-lip-lem}, we obtain
$$
 |u_h(x_i) - u_h(y_i)| = u_h(y_i) - u_h(x_i) \leq u_h(y_i-h e_i) - u_h(y_i) \leq C_1 h ||e_i|| = C_1 h.
$$
We conclude that for $i=1, 2$
$$
 |u_h(x_i) - u_h(y_i)| \leq C_3 h,
$$
where $C_3=\max \{ \, C_1, C_2\, \}$.
We have
\begin{multline*}
|u_h(x_1) - u_h(x_2)| \leq |u_h(x_1) - u_h(y_1)| + |u_h(y_1) - u_h(y_2)| + | u_h(y_2) -  u_h(x_2)| \\
\leq 2 C_3 h + C_1 ||y_1-y_2|| \leq  2 C_3 h + C_1 (||x_1-x_2|| + h ||e_1-e_2||) \\
\leq C_1 ||x_1-x_2|| + 2 (C_1+C_3) h. 
\end{multline*}
Finally, we consider the case $x_1 \in \Omega_h$ and $x_2 \in \widetilde{\Omega}_{ext} \setminus \Omega_h$. Put $x_2 = y_2 + h e_2, y_2 \in \Gamma_h$ and $e_2 \in \{\, r_1,\ldots, r_d \, \}$.  As in the previous case, we obtain $|u_h(x_1) - u_h(x_2)| \leq C_1 ||x_1-x_2|| + (C_1+C_3) h$. 

The second statement of the theorem is proven as in the proof of \cite[Theorem 11]{awanou2019uweakcvg}. 
\end{proof}
We now use an approximate Arzel\`a–Ascoli theorem to obtain compactness in the space of continuous functions.
%Following \cite[Appendix A]{esedoglu} we 
\begin{defn}  \cite[Appendix A]{calder2015pde}
Let $X \subset \R^d$ be a compact set. We say that a sequence $g_n: X \to \R$ is approximately equicontinuous if for every $\epsilon>0$, there exists $\delta >0$ such that for all $x, y \in X$ and $||x-y||_1 < \delta$, we have $|g_n(x) 
-g_n(y)| < \epsilon+ 1/n$, for all $n$.
\end{defn}

\begin{thm}  \cite[Appendix A]{calder2015pde} \label{ap-arzela}
Let $g_n: X \to \R$ for $X \subset \R^d$ a compact set, be approximately equicontinuous and uniformly bounded. Then there is a subsequence of $g_n$ converging uniformly on $X$ to a continuous function $g: X \to \R$. 
\end{thm}

\begin{proof}[Proof of Theorem \ref{final-thm-0002}]
We have $G(u_{h,\kappa})=0$ where $G$ was defined in \eqref{Gg}. Since $G$ is proper by Theorem \ref{proper}, $u_{h,\kappa}$ is uniformly bounded. Using Theorems \ref{boom} and \ref{ap-arzela} we obtain %the result. 
the uniform convergence on  $\tir{\Omega}$ of a subsequence 
$u_{h_k,\kappa}$  to a continuous function $v_{\kappa}$. The $\kappa$-local
convexity of $v_{\kappa}$ is proven as for the convexity of the uniform limit of $V_{max}$-discrete convex functions \cite[Lemma 16]{awanou2019uweakcvg}. 
\end{proof}

\subsection{Equations solved by the limit}

As with \cite[Theorem 22]{Awanou-second-sym}, the limit $v_{\kappa}$ from Theorem \ref{final-thm-0002} is a $\kappa$-viscosity solution of \eqref{m2}. 
It is immediate that the limit of $\kappa$-locaaly convex function as $\kappa \to \infty$ is convex. 
Furthermore, as with  \cite[Theorem 24]{Awanou-second-sym},  $v_{\kappa}$ converges uniformly on 
$\tir{\Omega}$ to a viscosity solution $v$ of \eqref{m2} as $\kappa \to \infty$. 

Under the assumption that  $f>0$ and $f\in C(\tir{\Omega})$, a viscosity solution of \eqref{m2} is also an Aleksandrov solution of \eqref{m2}. %

We note that $\Gamma_h \subset \Omega_h \subset \widetilde{\Omega}_{ext}$, so a solution $u_h$ of \eqref{m2d3} can be extended to $\mathbb{Z}^d_h$ using \eqref{extension}. Consequently, by an argument analogous to that used inPart 2 of \cite[Theorem 20]{Awanou-second-sym}, the limits $v_{\kappa}$ and $v$ can be extended  to $\R^d$ and satisfy the asymptotic cone condition \eqref{extension-00}. It follows that the limit $v$ is the unique Aleksandrov solution $u$ of  \eqref{m2} with asymptotic cone $K_{\Omega^*}$. For the latter $u(x^1)=0$. 

We conclude that as $h_k \to 0$, $w \, u_{h_k}(x^1_{h_k}) \to w \, v_{\kappa}(x^1)$ and as $\kappa \to \infty$, $w v_{\kappa}(x^1) \to 0$. Moreover, the convergence of $u_{h_k,\kappa}$ to $v_{\kappa}$ is uniform in $\kappa$ implying that $u_{h_k,\kappa}$ converges uniformly to $v$ as $h_k \to 0$ and $\kappa \to \infty$. Since the limit $v=u$ is unique, we conclude that $u_{h,\kappa} \to u$  uniformly on $\tir{\Omega}$. So, as $h \to 0$ and $\kappa \to \infty$, $w u_h(x^1_h)\to 0$.

%To study the convergence of $w u_h(x^1_h)$ as $h \to 0$, we first outline the proof of convergence of the discretization given in 
 %\cite{Awanou-second-sym}.

\section{Numerical experiments} \label{num-exp}

Better accuracy is obtained with $v_h(x) = u_h(x) + (u_h(x^1) - u(x^1))$, which is used in our experiments. It is possible to have faster computation times when the origin is an interior point of $\Omega^*$. This can be achieved by adding a linear function to the solution and translating $R$. 

For convenience, for $L(x) = a^* \cdot x$, we put 
$\partial (v+L) (y) = \partial (v+a^* \cdot x) (y)$.
We have for $y \in \R^d$
\begin{equation} \label{trans-a*}
\partial (v+a^* \cdot x) (y) = \partial v(y) + a^*.
\end{equation}
This follows from the following equivalences: $q \in \partial (v+a^* \cdot x) (y)$ if and only if $v(z)+L(z) \geq v(y) +L(y) + q \cdot (z-y), \, \text{for all} \, z \in\R^d$, i.e. $v(z) \geq v(y) + (q-a^*) \cdot (z-y),  \, \text{for all} \, z \in\R^d$, i.e. $q-a^* \in  \partial v(y)$.  

We conclude from \eqref{trans-a*} that 
\begin{equation} \label{trans-a2}
\partial u (\Omega)  = \Omega^* \ \text{if and only if } \partial (u+a^* \cdot x) (\Omega)  =  \Omega^* +a^*.
\end{equation}
Recall  the equation in measures
\begin{align} \label{m1m}
\begin{split}
\omega(R,u,E) &= \int_E f(x) d x \text{ for all Borel sets } E \subset \Omega \\ %\text{ with }
\partial u (\Omega)  &= \Omega^* .
\end{split}
\end{align}
Let us denote by $\omega(R(p-a^*),v+a^* \cdot x,E)$ the $R$-Monge-Amp\`ere measure of $v(x)+a^* \cdot x$ associated with $R(p-a^*)$. We have by \eqref{trans-a*} and a change of variable
\begin{multline*}
\omega(R(p-a^*),v+a^* \cdot x,E) =  \int_{ \partial (v+a^* \cdot x) (E)} R(p-a^*) d p
=  \int_{\partial v (E)+a^*} R(p-a^*) d p \\ =  \int_{\partial v(E)} R(p) d p
=\omega(R,v,E).
\end{multline*}
Therefore, from \eqref{trans-a2}, $u$ solves \eqref{m1m} if and only if $u$ solves
\begin{align} \label{m1mm}
\begin{split}
\omega(R(p-a^*),v+a^* \cdot x,E) &= \int_E f(x) d x \text{ for all Borel sets } E \subset \tir{\Omega} \\ %\text{ with }
 \partial (v+a^* \cdot x) (\Omega)  &=  \Omega^* +a^*.
\end{split}
\end{align}
We note that for \eqref{m1mm}, the analogue of the compatibility condition \eqref{necessary} holds.
Based on \eqref{m1mm} we may assume, without loss of generality, that $\Omega^*$ contains the origin as an interior point. 

The numerical results reported below are for $w<0$. Similar results were obtained for $w>0$.
We take $d=2$, $\Omega=(0,1)^2$, $a=(1/2,1/2)$, $x^1_h=h a$,  $V(x)=-W \cup W$ for all $x \in \Omega_h$ where 
$$W= \{\, (1,0), (0,1), (1,1), (1,-1), (2,1), (-1,2), (1,2), (-2,1) \, \}.$$ The initial guess is a quadratic function $p$ such that $\partial p(\Omega)$ is a rectangle contained in $\Omega^*$.

We give an example in the degenerate case $f \geq 0$. We take $R(x,y)=1$ and the exact solution 
$u(x,y)=\max\{ \, 0, |x-1/2|-1/8, |y-1/2|-1/8  \, \}$. This is a piecewise linear convex function with vertices $a_1=(3/8,5/8)$, 
$a_2=(3/8,3/8)$, $a_3=(5/8,3/8)$ and $a_4=(5/8,5/8)$. Here, $\tir{\Omega^*}$ is the square with vertices 
%$(-1/2,-1/2)$,  $(1/2,-1/2)$,$(1/2,1/2)$, $(-1/2,1/2)$ 
$(-1,0), (0,-1), (1,0)$ and $(0,1)$
with area 2. We have $|\partial u(a_i)|=1/2, i=1,\ldots,4$. Thus $f(x)=0$ in $\Omega$ except at the points $a_i$ where its value is $1/2$. %The scheme found the solution with a maximum error of $2.3641 \times 10^{-8}$ independent of the grid size $h=1/2^j, j=2, 3, \ldots, 7$. For this problem, $u_h(x^1_h)$ is zero up to machine precision, as expected, since for $x \notin \{ \, a_1, a_2, a_3, a_4 \, \}$, we have $f(x)=\omega(R,u, \{ \, x \, \})=0$.  

The approximations are not accurate unless all four vertices are included as grid points. For example, if none of the vertices lie on the grid, the problem effectively solved becomes $\det D^2 u(x)=0, |\partial u(\Omega)|=2$, which would not satisfy \eqref{necessary}. For 
$h=3/(8(1/2+37))$, we obtained a maximum error of $1.13 \times 10^{-5}$ and for $h=3/(8(1/2+49))$, a maximum error of $1.63 \times 10^{-5}$. Round-off errors dominate for this singular test case. We note that the exact solution is not strictly convex, while our approximations are strictly discrete convex.  For $h=3/(8(1/2+49))$, the smallest value of $\Delta_{he} u_h(x)$ is $-7 \times 10^{-15}$ and the smallest value of $\omega_V(R,u_h,\{ \, x \, \})$ is $-6 \times 10^{-11}$. We show in Figure \ref{subd-pic2} both the exact solution and the approximation. When we used $w=1$, there were points at which $\Delta_{he} u_h(x)=0$. However this did not hold for $h=3/(8(1/2+37))$. 

\begin{figure}[tbp] 
\begin{center}
\includegraphics[angle=0, height=5cm]{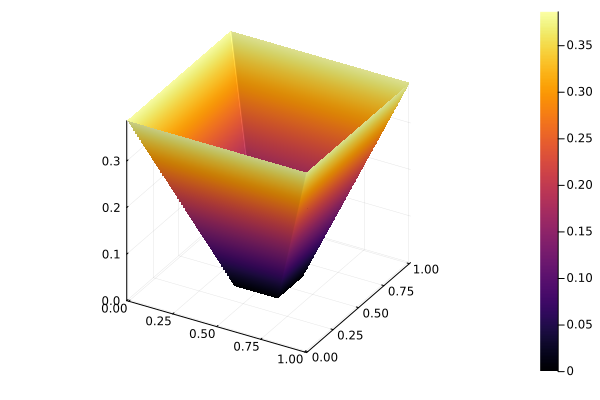}
\includegraphics[angle=0, height=5cm]{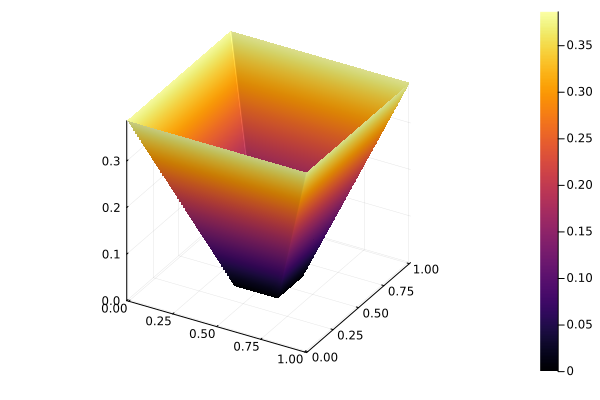}
\end{center}
\caption{Test2: Left: exact solution. Right: approximation for $h=3/(8(1/2+49))$.} \label{subd-pic2}
\end{figure}

\section*{Acknowledgments}

%The author would like to thank the referees for their comments which lead to a better presentation. 
The author was partially supported by NSF grant DMS-1720276. The author would like to thank the Isaac Newton Institute for Mathematical Sciences, Cambridge, for support and hospitality during the programme ''Geometry, compatibility and structure preservation in computational differential equations'' where a significant part of this work was undertaken. This work was partially supported by EPSRC grant no EP/K032208/1. ChatGPT was used for editing and improving the writing of the paper. ChatGPT was used for editing and improving the writing of the paper.

\bibliographystyle{abbrv}  
%\bibliography{/Users/awanou/Desktop/MathHome13/Monge6/SplineMonge09}
%\input{/Users/awanou/Desktop/MathHome13/Monge6/MongeDamped/MongeSecondDamped400.bbl}

\end{document}